\newcommand{\abc}[1]{\left( #1 \right)}
\newcommand{\abz}[1]{\left[ #1 \right]}
\newcommand{\n}{\nabla}
\newcommand{\be}{\begin{equation}}
\newcommand{\ee}{\end{equation}}
\newcommand{\nei}{\nabla_{E_i}}
\newcommand{\nej}{\nabla_{E_j}}
\newcommand{\nij}{\nabla^2_{E_i,E_j}}
\newcommand{\nii}{\nabla^2_{E_i,E_i}}
\def\dd{\mathrm{d}}
\def\ae{\langle}
\def\ad{\rangle}
\def\dd{\textrm{d}}
\def\dd{\textrm{d}}
\newcommand{\p}{\partial}
\newcommand{\al}{\alpha}
\def\dfrac{\displaystyle\frac}
\def\dsum{\displaystyle\sum}
\newtheorem{prop}{Proposition}[section]
\newtheorem{thm}[prop]{Theorem}
\newtheorem{lem}[prop]{Lemma}
\newtheorem{rem}[prop]{Remark}
\newtheorem{defn}[prop]{Definition}
\numberwithin{equation}{section}
\begin{document}


\baselineskip=17pt


\title[Prescirbed Weingarden curvature]{Starshaped compact hypersurfaces with prescirbed Weingarden curvature in warped product manifolds}

\author[Chen, Li and Wang]{Daguang Chen, Haizhong Li and Zhizhang Wang}

\thanks{*The first author was supported by  NSFC grant 11471180, the second author was supported by NSFC grant No.11671224, and the third
author was partially supported  NSFC grant No.11301087 and No. 11671069.}

\keywords{Weingarden curvature, $\sigma_k$, $C^2$ estimates, hypersurfaces}
\subjclass[2010]{Primary 53C45, Secondary 53J60}

\maketitle


\begin{abstract}
Given a compact Riemannian manifold $M$, we consider a
warped product  $\bar M = I \times_h M$  where $I$ is an open
interval in $\Bbb R$. For a positive function $\psi$ defined on $\bar M$, we generalized  the arguments in \cite{GRW2015} and \cite{RW16}, to obtain the curvature estimates for Hessian equations $\sigma_k(\kappa)=\psi(V,\nu(V))$.
We also obtain some existence results for the starshaped compact hypersurface $\Sigma$ satisfying the above  equation with various assumptions.
\end{abstract}

\section{Introduction}
Assume that $\Sigma^n$  is a hypersurface in Riemannian manfold $\bar{M}^{n+1}$.
The Weingarten curvature equation is given by
\begin{eqnarray*}
\sigma_k(\kappa(X))=\psi(X), \ \ \forall\,  X\in \Sigma,
\end{eqnarray*}
where $X$ is the position vector field of hypersurfce $\Sigma$ in $\bar{M}^{n+1}$ and $\sigma_k$ is the $k^{th}$ elementary symmetric function.

Finding closed hypersurfaces with prescribed Weingarten curvature in Riemannian manifolds attracts many authors’ interest. Such results were obtained
for case of prescribing mean curvature by Bakelman-Kantor \cite{BK73,BK74}  and by Treibergs-Wei \cite{TW} in the Euclidean space, for the case of prescribing Gaussian curvature by Oliker \cite{Oliker84}, and for general Weingarten curvatures by Aleksandrov \cite{Alex}, Firey \cite{Firey68}, Caffarelli Nirenberg-Spruck \cite{CNSIV}. For Riemannian manifolds, some results have been obtained by Li-Oliker \cite{LO02} for unit sphere, Barbosa-de Lira-Oliker \cite{BLO} for space forms, Jin-Li \cite{JL06} for hyperbolic space,  Andrade-Barbosa-de Lira \cite{ABdeL2009} for warped product manifolds, Li-Sheng \cite{LiSheng13} for Riemannain manifold equipped with  a global normal Gaussian coordinate system.

For the hypersurface $\Sigma$ in Euclidean space $\Bbb R^{n+1}$, the Weingarten curvature equation in general form is defined by
\begin{eqnarray*}
\sigma_k(\kappa(X))=\psi(X,\nu(X)), \ \ \forall\,  X\in \Sigma,
\end{eqnarray*}
where $\nu(X)$ is the normal vector field along the hypersurface $\Sigma$. In many cases, the curvature estimates are the key part for the above prescribed curvature problems. Let's give a brief review. When $k=1$, curvature estimate comes from the theory of quasilinear PDE.   If $k=n$, curvature estimate in this case for general $\psi(X, \nu)$ is due to Caffarelli-Nirenberg-Spruck \cite{CNS1}.  Ivochkina \cite{I1, I} considered the Dirichlet problem of the above equation on domains in $\mathbb R^n$, and obtained $C^2$ estimates  there under some extra conditions on the dependence of $f$ on $\nu$. $C^2$ estimate was also proved for equation of prescribing curvature measures problem in \cite{GLM, GLL}.
 If the function $\psi$ is  convex respect to the normal $\nu$, it is well
known  that the global $C^2$ estimate has been obtained by B. Guan
\cite{B}. Recently, Guan, Ren and the third author \cite{GRW2015}  obtained global $C^2$ estimates for a closed convex hypersurface $\Sigma\subset \mathbb R^{n+1}$ and then solved the long standing problem (\ref{WeinEqu}). In the same paper \cite{GRW2015}, they also proved the estimate for  starshaped $2$-convex hypersurfaces by introducing some new test curvature functions. In \cite{LRW}, Li, Ren and the third author relax the convex to $(k+1)$- convex for any $k$ Hessian equations. In \cite{RW16}, Ren and the third author totally solved the case  $k=n-1$, that is the global curvature estimates of $n-1$ convex solutions for $n-1$ Hessian equations. In \cite{SX15}, Spruck-Xiao extended $2$-convex case in \cite{GRW2015}  to space forms and give a simple proof for the Euclidean case. We also note the recent important work on the curvature estimates and $C^2$ estimates developed by Guan \cite{B2} and Guan-Spruck-Xiao \cite{GSX}.

 These type of equations and estimates for generalized right hand side appear some new geometric applications recently. In \cite{PPZ1}, \cite{PPZ2}, Phong-Picard-Zhang generalized the Fu-Yau equations, which is a complex $2$-Hessian equations depending on gradient term
 on the right hand side. The \cite{PPZ2}, \cite{PPZ3} obtained their $C^2$ estimates using the idea of \cite{GRW2015}.  In \cite{GL}, Guan-Lu
 consider the curvature estimate for hypersurfaces in high dimensional Riemannian manifolds, which is also a $2$-Hessian equation depending on
 normal.  The estimates in \cite{GRW2015} is also applied in \cite{X} and \cite{BIS}.

 Let $(M^n,g^\prime)$ be a compact Riemannian manifold and $I$ be an open interval in $\Bbb R$. The warped product manifold $\bar{M}=I\times_h M$  is endowed with the metric
\begin{equation}\label{Wmetric}
\bar{g}^2=dt^2+h^2(t)g^\prime,
\end{equation}
where $h: I\longrightarrow \Bbb R^+$ is the positive differential function.
Given a differentiable function $z:M\longrightarrow I$, its graph is defined as  the hypersuface
\begin{equation*}
\Sigma=\{X(u)=(z(u),u)|u\in M \}.
\end{equation*}
For the Weingarten curvature equation in general form
\begin{eqnarray}\label{WeinEqu}
\sigma_k(\kappa(V))=f(\kappa(V))=\psi(V,\nu(V)), \ \ \forall  V\in \Sigma,
\end{eqnarray}
where $V=h\,\dfrac{\p}{\p t}$ is the position vector field of hypersurfce $\Sigma$ in $\bar M$, $\sigma_k$ is the $k^{th}$ elementary symmetric function, $\nu(V)$ is the inward unit normal vector feild along the hypersurface $\Sigma$ and $ \kappa(V)=(\kappa_1,\cdots,\kappa_n)$ are principal curvatures of hypersurface $\Sigma$ at $V$. 
Given $t_-, t_+$ with $t_-<t_+$, we define the annulus domain $\bar{M}_-^+=\{(t,u)\in\bar{M}|t_-\leq t \leq t_+\}$.

In this article, we will generalize the results in  \cite{GRW2015}, \cite{RW16} to the hypersufaces in warped product manifolds.  The main results of this paper are the followings:

\begin{thm}\label{MainThm1}
Let $M^n$ be a compact Riemannian manifold and  $\bar M$ be the warped product manifold with the metric (\ref{Wmetric}). Assume that $h$ is a positive differential function and $h^\prime>0$.	Suppose that $\psi$ satisfies
\begin{enumerate}	
	\item[(a)] $\psi (t,u,\nu(u)) > C^k_n(\kappa(t))^k$ for $t\le t_-$,	
	\item[(b)] $\psi (t,u,\nu(u)) < C^k_n(\kappa(t))^k$ for $t\ge t_+$,
	\item[(c)] $\partial_t \big(h^k\psi(V,\nu)\big) \le 0$ for $t_-<t<t_+$,
\end{enumerate}
where $\kappa(t)=h'(t)/h(t)$ and $C^k_n$ is the combinatial numbers.
Then there exists  a unique  differentiable function $z:M^n\rightarrow I$ solve the equation (\ref{WeinEqu}) for $k=2$ and $k=n-1$ whose graph $\Sigma$ is contained in the interior of the region $\bar{M}_-^+$.
\end{thm}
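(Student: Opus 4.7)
The plan is to solve \eqref{WeinEqu} by a continuity method. Embed the equation into a one-parameter family
\begin{equation*}
\sigma_k(\kappa(V)) = \psi^\tau(V,\nu(V)), \qquad \tau\in[0,1],
\end{equation*}
with $\psi^1=\psi$ and $\psi^0$ chosen so that a coordinate slice $\{t=t_0\}$ for some $t_-<t_0<t_+$ is an explicit admissible solution; for instance $\psi^0 \equiv C_n^k\kappa(t_0)^k$ works, since such a slice has all principal curvatures equal to $\kappa(t_0)$, and $\psi^\tau$ can be built by a homotopy that preserves (a), (b), (c) along the way. The set of $\tau$ for which a starshaped admissible graph solution exists with image inside the open annulus $\bar M_-^+$ is nonempty by construction; openness follows from the implicit function theorem, the linearized operator $\sigma_k^{ij}\nabla_i\nabla_j + (\text{lower order})$ being strictly elliptic on the admissible cone and invertible thanks to the sign condition (c), which makes the zero-order term satisfy a maximum principle. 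Closedness reduces to uniform $C^{2,\alpha}$ a priori bounds.

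For $C^0$, conditions (a) and (b) together with $\sigma_k = C_n^k\kappa(t)^k$ on the slice $\{t=\text{const}\}$ show that $\{t=t_-\}$ and $\{t=t_+\}$ are a sub- and a supersolution respectively, so a touching argument traps any admissible graph $\Sigma=\{z(u)\}$ in $t_-<z(u)<t_+$. For the $C^1$ estimate, work with the support-type quantity $\omega=\langle V,\nu\rangle$, whose positivity is equivalent to bounded gradient $|\nabla z|$ in the warped metric. Differentiating the equation, applying the linearized operator $\sigma_k^{ij}\nabla_i\nabla_j$ to $\omega$, and using the warped-product identities for $\nabla V$ and $\nabla\nu$, one finds that condition (c) in the form $\partial_t(h^k\psi)\le 0$ is precisely what is needed so that at an interior minimum of $\omega$ the maximum principle yields a positive lower bound $\omega\ge c_0>0$.

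The main obstacle, and the heart of the matter, is the $C^2$ estimate. Here we invoke the warped-product versions (stated earlier in the paper, generalizing \cite{GRW2015} for $k=2$ and \cite{RW16} for $k=n-1$) of the global curvature estimate. The argument tests a function of the form $\log\kappa_{\max}+\Phi(\omega)+\Psi(z)$ at its maximum point $p_0$, diagonalizes the second fundamental form so that $\kappa_1(p_0)=\kappa_{\max}(p_0)$, and groups the good concavity terms of $\sigma_k^{1/k}$ against the bad third-order terms arising from commuting covariant derivatives twice. In the warped product setting, these commutators produce extra terms involving the ambient curvature $\bar R$ of $\bar M$, while $\nabla V$ and $\nabla\nu$ generate contributions involving $h,h',h''$ and $g'$. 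The hypothesis $h'>0$ provides the crucial sign needed to absorb these ambient contributions into the good terms, once $\Phi$ and $\Psi$ are chosen with gradients tuned to cancel the worst off-diagonal pieces; the specific combinatorial structure of $\sigma_2$ (respectively $\sigma_{n-1}$) is then what makes the third-order inequality close and gives $\kappa_{\max}\le C$ depending only on the $C^1$ bound, $\psi$, and the warping data.

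With the $C^2$ bound in hand, $F(\kappa)=\sigma_k^{1/k}(\kappa)$ is uniformly elliptic and concave on the range of $\kappa$, so Evans--Krylov plus Schauder theory upgrade to uniform $C^{2,\alpha}$ bounds. Closedness follows and the continuity method produces a solution at $\tau=1$. Uniqueness is a consequence of the comparison principle for $\sigma_k^{1/k}$ together with the strict monotonicity encoded in (c): at a first interior touching point of two distinct solutions $z_1\neq z_2$ one derives a contradiction with $\partial_t(h^k\psi)\le 0$.
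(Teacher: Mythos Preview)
Your overall strategy---continuity method, $C^0$ from the barrier slices via (a) and (b), $C^1$ via the support function using (c), $C^2$ from the warped-product analogues of the Guan--Ren--Wang and Ren--Wang estimates, then Evans--Krylov and Schauder---is exactly the route the paper takes. There is, however, a concrete gap in your choice of starting data for the homotopy. The constant $\psi^0\equiv C_n^k\kappa(t_0)^k$ does \emph{not} satisfy condition (c): since $h'>0$,
\[
\partial_t\bigl(h(t)^k\cdot C_n^k\kappa(t_0)^k\bigr)=C_n^k\kappa(t_0)^k\cdot k\,h^{k-1}h'>0,
\]
so (c) fails for $\psi^0$, and hence for $\psi^\tau$ with $\tau<1$ under any linear interpolation. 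But the gradient estimate you invoke (and that the paper proves) relies precisely on (c); without it the $C^1$ bound is not uniform along the deformation and the closedness step collapses. Your sentence ``$\psi^\tau$ can be built by a homotopy that preserves (a), (b), (c)'' is thus in direct contradiction with the example you give.

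The paper repairs this by taking $\psi^0(t)=\varphi(t)\,C_n^k\kappa(t)^k$ with $\varphi>0$, $\varphi(t)>1$ for $t\le t_-$, $\varphi(t)<1$ for $t\ge t_+$, and $\varphi'<0$. Then $h^k\psi^0=C_n^k\varphi(t)\,(h'(t))^k$, and one can arrange $\partial_t\bigl(\varphi\,(h')^k\bigr)\le 0$ on the compact interval $[t_-,t_+]$ by making $\varphi'/\varphi$ sufficiently negative. With this $\psi^0$ the linear homotopy $\psi^s=s\psi+(1-s)\psi^0$ preserves (a), (b), (c) for every $s\in[0,1]$, the unique $t_0$ with $\varphi(t_0)=1$ furnishes the initial slice solution, and the remainder of your outline goes through verbatim.
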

For the convex hypersurcaes in any warped product manifolds, we obtain the global curvature estimates.
\begin{thm}\label{MainThm2}
Suppose $\Sigma\longrightarrow \bar{M}^{n+1}$ is a  convex compact hypersurface satisfying curvature equation \eqref{WeinEqu} for some positive function $\psi(V, \nu)\in C^{2}(\Gamma)$, where $\Gamma$ is an open neighborhood of unit normal bundle of $M$ in $\bar{M}^{n+1} \times \mathbb S^n$, then there is a constant $C$ depending only on $n, k$, $|z|_{C^1}$, $\inf \psi $ and $\|\psi\|_{C^2}$, such that
	\begin{equation}\label{Mc2-0}
	\max_{u\in M} \kappa_i(u) \le C.
	\end{equation}
\end{thm}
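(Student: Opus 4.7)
The plan is to adapt the convex-hypersurface test-function argument of Guan-Ren-Wang \cite{GRW2015} to the warped-product ambient. Let $\eta := \bar g(V,\nu) = h(t)\,\bar g(\partial_t,\nu)$ denote the generalized support function and introduce the test quantity
\[
W = \log\lambda_1 + \alpha\log\eta + A\,\Phi(t),
\]
where $\lambda_1$ is the largest principal curvature, $\Phi$ is a monotone function of the height tuned to the warping factor, $\alpha>0$ is a small parameter, and $A>0$ is a large one. Since the graph $z$ is $C^1$-bounded, $\Sigma$ lies in a fixed annulus, so $W$ attains an interior maximum at some point $p$. At $p$ I would choose a local orthonormal frame $\{e_i\}$ diagonalizing $h_{ij}$ (with $\lambda_1=h_{11}$), regularize $W$ by the standard perturbation if $\lambda_1$ has multiplicity, and exploit the critical equations $W_i=0$ together with the degenerate-elliptic inequality $F^{ii}W_{ii}\le 0$ for $F = \sigma_k^{1/k}$.

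The next step is to implement the warped-product analogues of the identities used in \cite{GRW2015}. The structural fact $\bar\nabla_X V = h'(t)\,X$ (conformality of $V$) gives
\[
\eta_i = h_{ij}\,\bar g(V,e_j), \qquad
\eta_{ii} = h_{ii,j}\,\bar g(V,e_j) + h'(t)\,h_{ii} - h_{ii}^2\,\eta + (\text{warping and Riemann terms}),
\]
while Codazzi acquires an ambient-curvature correction $h_{ij,l}-h_{il,j}=\bar R(\nu,e_i,e_j,e_l)$, and commuting $\lambda_{1,ii}$ produces further $\bar R$-terms. Substituting these into $F^{ii}W_{ii}\le 0$ and eliminating the first derivatives $h_{11i}$ by the critical equations, the estimate reduces to balancing the usual third-order contributions
\[
-\sum_i \frac{F^{ii}h_{11i}^2}{\lambda_1^2} + \sum_{p>1}\frac{F^{pp}-F^{11}}{\lambda_1-\lambda_p}\,h_{11p}^2
\]
against the positive terms $\alpha F^{ii}\eta_i^2/\eta^2$, $A\,\Phi''(t)\,F^{ii}|e_i(t)|^2$, and the $F^{pq,rs}$-concavity contribution.

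The algebraic heart of the argument, carried over from \cite{GRW2015}, splits the indices into a \emph{good} set $G=\{p:\lambda_p\ge \delta\lambda_1\}$ and a \emph{bad} set $B$. On $G$ the difference quotients $(F^{pp}-F^{11})/(\lambda_1-\lambda_p)$ are bounded, while on $B$ convexity ($\lambda_i>0$) together with $\sigma_k(\kappa)=\psi$ gives $F^{pp}\lambda_p\sim \psi^{1/k}$, which absorbs the bad terms. Fixing $\alpha$ first and then $A$ sufficiently large (depending on $n,k,|z|_{C^1},\inf\psi,\|\psi\|_{C^2}$ and the warping data $h,h',h''$) converts the balance into a quantitative bound $\lambda_1(p)\le C$, hence the global estimate.

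The main obstacle I expect is the bookkeeping of warping and ambient-curvature corrections: every commutator or integration by parts produces extra $h'$, $h''$ and ambient Riemann-tensor factors absent from the Euclidean case. The most delicate is the unsigned term $h''(t)\,\bar g(V,e_j)^2$ in $\eta_{ii}$, which has no definite sign. To dominate it uniformly one must choose $\Phi(t)$ so that $A\,\Phi''(t)\,F^{ii}|e_i(t)|^2$ yields a strictly positive lower bound on the compact annulus $[t_-,t_+]$; this is possible because convexity and the equation force $\sum_i F^{ii}\ge c(n,k)\,\psi^{(k-1)/k}>0$. Once these corrections are absorbed, the Guan-Ren-Wang algebra runs as in the Euclidean case and delivers the stated bound \eqref{Mc2-0}.
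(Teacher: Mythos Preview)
Your proposal has a genuine gap in the choice of test function. You write $W=\log\lambda_1+\alpha\log\eta+A\Phi(t)$, i.e.\ the classical largest-eigenvalue quantity, and describe this as ``the convex-hypersurface test-function argument of Guan--Ren--Wang.'' It is not. The paper's proof of Theorem~\ref{MainThm2} (Section~6), following \cite{GRW2015}, uses
\[
\Psi=\tfrac12\log P(\kappa)-N\log\tau-\beta\eta,\qquad P(\kappa)=\sum_{i=1}^n\kappa_i^2,
\]
and the whole point of \cite{GRW2015} was that the $\log\lambda_1$ test function does \emph{not} close for $3\le k\le n-2$, even for convex hypersurfaces. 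After commuting derivatives and using concavity of $\sigma_k^{1/k}$, the residual third-order combination
\[
-\sum_i\frac{F^{ii}a_{11i}^2}{\kappa_1^2}
+\frac{2}{\kappa_1}\sum_{j>1}\frac{F^{jj}-F^{11}}{\kappa_1-\kappa_j}\,a_{1j1}^2
\]
cannot be shown nonnegative from convexity alone; the pointwise inequality $\tfrac{2(F^{jj}-F^{11})}{\kappa_1(\kappa_1-\kappa_j)}\ge \tfrac{F^{jj}}{\kappa_1^2}$, which is what makes the $k=2$ argument of Section~5 work, fails for general~$k$. Your proposed split into $G=\{p:\lambda_p\ge\delta\lambda_1\}$ and $B$, with the claim that ``on $B$ convexity together with $\sigma_k(\kappa)=\psi$ gives $F^{pp}\lambda_p\sim\psi^{1/k}$, which absorbs the bad terms,'' is not a valid mechanism for these third-order quantities and does not correspond to anything in \cite{GRW2015}.

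What actually happens in the paper is that the $P=\sum\kappa_i^2$ test function produces, for each direction $i$, five groups of third-order terms $A_i,B_i,C_i,D_i,E_i$ (defined in Section~6) whose sum is nonnegative by Lemma~\ref{lemma 1} together with the combinatorial lemmas (Lemma~4.2, Lemma~4.3, Corollary~4.4) of \cite{GRW2015}; the case split is then Case~(A) (some eigenvalue gap $\kappa_i\ge\delta_i\kappa_1$, $\kappa_{i+1}\le\delta_{i+1}\kappa_1$) versus Case~(B) ($\kappa_k\ge\delta_k\kappa_1$, giving $\kappa_1^k\le C\sigma_k=C\psi$ immediately). The warped-product corrections you flag---Codazzi defects $\bar R_{0ijk}$, the $h',h''$ terms in $\tau_{ii}$ and $\eta_{ii}$---are genuine but lower order: they contribute at worst $C(1+\kappa_1)\sum_i\sigma_k^{ii}$ and are absorbed by choosing $\beta$ large. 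They are not the obstruction; the top-order third-derivative algebra is, and for that you must replace $\log\lambda_1$ by $\tfrac12\log\sum\kappa_i^2$.
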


Since the second foundamental form do not satisfy  Codazzi properties for hypersurfaces in warped product in general, the constant rank theorem is still no known. Thus,  the above estimates only can implies  the existence results in sphere.
\begin{thm}\label{MainThm3}
	Let  $\bar M$ be the sphere  with sectional curvature  $\lambda>0$ which means the metric $\bar g$ of $\bar M$ is defined by (\ref{Wmetric}), where function $h$ is defined by 
\begin{equation}\label{Fh}
h(t)=\dfrac{\sin\sqrt{\lambda} t}{\sqrt{\lambda}}.
\end{equation}	
	Suppose that $\psi$ satisfies
	\begin{enumerate}	
		\item[(a)] $\psi (t,u,\nu(u)) > \kappa(t)$ for $t\le t_-$,	
		\item[(b)] $\psi (t,u,\nu(u))< \kappa(t)$ for $t\ge t_+$,
		\item[(c)] $(\psi^{-1/k})_{ij}+\lambda\psi^{-1/k}g_{ij}\geq 0,$ for any $\nu$,
		\end{enumerate}
where $\kappa(t)=h'(t)/h(t)=\sqrt{\lambda}\cot(\sqrt{\lambda t})$ and $t_+<\pi/2$. Then there exists  a differentiable function $z:\mathbb{S}^n\rightarrow I$ solve the equation (\ref{WeinEqu}) for  any $k$ whose graph $\Sigma$ is  a strictly convex hypersurface and is contained in the interior of the region $\bar{M}_-^+$.	
\end{thm}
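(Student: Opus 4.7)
The plan is to run a continuity method along a path of right-hand sides $\psi_{\tau}$, $\tau\in[0,1]$, starting from a rotationally symmetric datum $\psi_{0}$ for which a coordinate slice $\{t=t_{0}\}\subset(t_-,t_+)$ is an exact strictly convex solution, and ending at $\psi_{1}=\psi$; one arranges the homotopy so that (a)--(c) hold uniformly along the path. Let $T\subset[0,1]$ denote the set of $\tau$ admitting a strictly convex starshaped solution whose graph lies in $\bar M_-^+$. Openness of $T$ follows from the invertibility of the linearised operator on strictly convex graphs via the standard implicit function theorem in H\"older spaces.

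For closedness one needs uniform a priori $C^{2}$ estimates. Hypotheses (a) and (b) provide barriers that force $t_-<z<t_+$ by the maximum principle, in the same spirit as in Theorem~\ref{MainThm1}. A standard gradient estimate for starshaped graphs in warped products (of the type used in \cite{ABdeL2009}) then yields $|\nabla z|\le C$. Since the solutions along the path are kept strictly convex, Theorem~\ref{MainThm2} applies and produces the upper bound $\kappa_{i}\le C$; the matching lower bound $\kappa_{i}\ge c>0$ follows from $\sigma_{k}(\kappa)=\psi\ge \inf\psi>0$ together with the upper bound on the remaining principal curvatures.

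The crux, and the step I expect to be the main obstacle, is preservation of strict convexity along $T$. Here the constant sectional curvature of $\bar M$---encoded in the identity $h''+\lambda h=0$ satisfied by \eqref{Fh}---is essential: the round sphere admits a polar/support-function description on $\mathbb{S}^{n}$ under which convexity of $\Sigma$ translates into positivity of the symmetric tensor $W(u)_{ij}=\nabla^{2}_{ij}u+\lambda\,u\,g_{ij}$ for the support-type function $u$, and \eqref{WeinEqu} becomes a Hessian equation of the form $\sigma_{k}\bigl([W(u)]^{-1}\bigr)=\widetilde{\psi}(u,\nabla u)$ on $\mathbb{S}^{n}$. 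Rewriting it as $\bigl(\sigma_{k}(W^{-1})\bigr)^{-1/k}=\psi^{-1/k}$, hypothesis (c) $\nabla^{2}_{ij}(\psi^{-1/k})+\lambda\,\psi^{-1/k}g_{ij}\ge 0$ is precisely the convexity-type condition on the right-hand side required to invoke the Caffarelli--Guan--Ma / Bian--Guan constant rank theorem applied to $W(u)$. That theorem forces $\operatorname{rank}W(u)$ to remain constant along the continuity path, so strict convexity of the initial hypersurface propagates to every $\tau\in T$, closing $T$ and giving $1\in T$. This argument does not extend to general warped products because the polar reformulation and the constant rank theorem both rely on the space-form relation $h''+\lambda h=0$; this is why Theorem~\ref{MainThm3} is restricted to the sphere.
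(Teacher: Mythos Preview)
Your overall architecture---homotopy from an explicit slice solution, uniform a priori estimates along the path, and a constant-rank mechanism to preserve strict convexity---is the paper's as well. Two of the key steps, however, are carried out differently.

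For the constant-rank step you propose a support-function/polar reformulation on $\mathbb{S}^n$, reducing to positivity of $W(u)=\nabla^2 u+\lambda u\,g$ and then invoking Bian--Guan. The paper does not do this. It proves a constant-rank theorem (Proposition~\ref{Prop}) \emph{directly} for the second fundamental form $a_{ij}$ of $\Sigma$: since the ambient space is a space form, $a_{ij}$ is a Codazzi tensor and the Guan--Ma test function $\sigma_{l+1}(a)+A\sigma_{l+2}(a)$ works without modification. Condition (c) enters precisely as the ambient inequality $\bar\nabla^2(\psi^{-1/k})+\lambda\psi^{-1/k}\bar g\ge 0$ that this computation needs. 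This is cleaner, and it avoids having to justify your claimed reduction---that convexity of $\Sigma$ becomes positivity of $\nabla^2 u+\lambda u\,g$ and that the equation transforms as you state---which you assert but do not prove, and which is not entirely standard in the sphere.

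For the deformation, the paper uses degree theory (following \cite{GRW2015}) rather than the implicit-function continuity you sketch; since Theorem~\ref{MainThm3} does not claim uniqueness, plain continuity would need an extra argument to keep hold of a solution branch. More importantly, a substantial portion of the paper's proof is spent on something you dispatch in a single clause: it constructs the explicit auxiliary $\bar\varphi=C_n^k\,\varphi(t)\,\kappa(t)^k$, sets $\psi^s=\bigl(s\,\psi^{-1/k}+(1-s)\,\bar\varphi^{-1/k}\bigr)^{-k}$, and then verifies by direct computation that $(\bar\varphi^{-1/k})_{,ij}+\lambda\,\bar\varphi^{-1/k}\bar g_{ij}\ge 0$, so that condition (c) persists for all $s$. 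This verification---which uses $h''+\lambda h=0$ and forces the choice $\varphi'<0$, $\varphi\varphi''>\tfrac{k-1}{k}(\varphi')^2$ (e.g.\ $\varphi(t)=e^{(t_-+t_+)/2-t}$)---is where the space-form hypothesis is actually consumed in the paper's argument, not in any support-function rewriting.
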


The paper is organized as follows. In Section 2, we fix notation and recall some basic formulae for geometric and analytic preliminaries, including the
detailed description of the problem. In Section 3, we state the $C^0-$estimates without proof under
the hypothesis of the theorem. In Section 4, the gradient estimates of (\ref{WeinEqu})
is presented. In Section 5, the curvature estimates is proved  for the starshaped $2$-convex case. In Section 6 and Section 7,  the $C^2$ estimtes is obtained for  convex and $(n-1)$-convex hypersurface in warped product manifold $\bar M$. The last section derives the constant rank theorem and existence results .

\section{Preliminaries}
\subsection{Warped product manifold $\bar M$}
Let $M^n$ be a compact Riemannian manifold with the metric $g^\prime$ and $I$ be an open interval in $\Bbb R$. Assuming $h: I\longrightarrow \Bbb R^+$ is the positive differential function and $h^\prime>0$,  the manifold $\bar{M}=I\times_h M$ is called the warped product if it is endowed with the metric
\begin{equation}\label{WPmetric}
\bar{g}^2=dt^2+h^2(t)g^\prime.
\end{equation}

In the section, we use Latin lower case letters $i,j,\ldots$ to refer
to indices running from $1$ to $n$ and $a,b,\ldots $ to indices from
$0$ to $n-1$. The  Einstein summation convention is used throughout
the paper.

The metric  in $\bar M$ is denoted by $\langle
\cdot,\cdot \rangle$. The corresponding Riemannian connection in
$\bar M$ will be denoted by $\bar\nabla$. The usual connection in
$M$ will be denoted $\nabla'$. The curvature tensors in $M$ and
$\bar M$ will be denoted by $R$ and $\bar R$, respectively.

Let $e_1, \ldots, e_{n-1}$ be an orthonormal
frame field in  $M$ and  let $\theta_1, \ldots, \theta_n$ be the
associated dual frame. The connection forms $\theta_{ij}$ and
curvature forms $\Theta_{ij}$ in $M$ satisfy the structural
equations
\begin{eqnarray}
& & \dd\theta_i = \sum_j\theta_{ij}\wedge \theta_j,\quad \theta_{ij}=-\theta_{ji}, \\
& &  \dd\theta_{ij} - \sum_k\theta_{ik}\wedge \theta_{kj}=\Theta_{ij}=-\frac12 \sum_{k,l}R_{ijkl}\theta_k\wedge\theta_l.
\end{eqnarray}
An orthonormal frame in $\bar M$ may be defined by $\bar{e}_i =
(1/h)e_i,\, 1\le i\le n-1,$  and $\bar{e}_{0} =
\partial/\partial t$. The associated dual frame is then
$\bar{\theta}_i = h\theta_i$ for $1\le i\le n-1$ and
$\bar{\theta}_{0}=\dd t$.
A simple computation permits to obtain
\begin{lem} On the leaf $M_t$, the curvature satisfies
	\begin{equation}\label{Curvature0}
\bar R_{ijk0}=0
\end{equation}
and the principle curvature is given by
\begin{equation} \label{Pcur1}
\kappa(t) = h'(t)/h(t)
\end{equation}
where the  inward unit normal $-\bar e_0=-\partial/\partial t$ is choosen  for each leaf $M_t$.
\end{lem}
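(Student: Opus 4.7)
The plan is to reduce both claims to a direct coordinate computation that exploits the product structure of $\bar M=I\times_h M$. I would work in coordinates $(t,x^1,\dots,x^n)$ adapted to the warped product, so that $\bar g=dt^2+h^2(t)g'_{ij}(x)\,dx^i dx^j$. A one-line Koszul calculation then yields the Christoffel symbols
\begin{equation*}
\bar\Gamma^0_{ij}=-hh'\,g'_{ij},\qquad \bar\Gamma^k_{0i}=\frac{h'}{h}\delta^k_i,\qquad \bar\Gamma^k_{ij}=\Gamma'^{\,k}_{ij},
\end{equation*}
while $\bar\Gamma^0_{00}$, $\bar\Gamma^0_{0i}$, and $\bar\Gamma^i_{00}$ vanish. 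Both assertions of the lemma will be read off from these.

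For the principal curvatures, I would use that the inward unit normal of the leaf $M_t$ is $\nu=-\partial_t$, so the shape operator sends $X\mapsto -\bar\nabla_X\nu=\bar\nabla_X\partial_t$. For any $X=X^i\partial_{x^i}$ tangent to $M_t$, the formula for $\bar\Gamma^k_{0i}$ above gives $\bar\nabla_X\partial_t=(h'/h)X$. Hence the shape operator equals $(h'(t)/h(t))\,\mathrm{Id}$ on $TM_t$, all principal curvatures coincide with $h'(t)/h(t)$, and (\ref{Pcur1}) follows. The passage to the orthonormal frame $\bar e_i=(1/h)e_i$ does not affect this eigenvalue.

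For the curvature identity (\ref{Curvature0}), the cleanest route is to invoke the standard warped-product curvature formula: when $U,V,W$ are tangent to the fibre,
\begin{equation*}
\bar R(V,W)U = R^{M}(V,W)U-\left(\frac{h'}{h}\right)^2\bigl(\langle W,U\rangle V-\langle V,U\rangle W\bigr),
\end{equation*}
which again lies in the fibre direction; pairing with $\bar e_0=\partial_t$ therefore gives $\bar R_{ijk0}=0$. Alternatively, one may substitute the Christoffel symbols above into the coordinate formula for the Riemann tensor $\bar R^{\,l}{}_{ijk}=\partial_i\bar\Gamma^l_{jk}-\partial_j\bar\Gamma^l_{ik}+\bar\Gamma^l_{im}\bar\Gamma^m_{jk}-\bar\Gamma^l_{jm}\bar\Gamma^m_{ik}$ with $l=0$ and verify the cancellation directly; the derivative terms cancel by the symmetry of $hh'g'_{jk}$ in $j,k$ combined with compatibility of $\Gamma'$ with $g'$, and the quadratic terms collapse to zero because $\bar\Gamma^0_{i0}=0$.

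There is no substantive obstacle here beyond careful bookkeeping: one must be consistent with the sign convention for $\nu$ (the inward normal) and verify that the rescaling from the coordinate basis to the orthonormal frame $\bar e_i=(1/h)e_i$ does not introduce any $\partial_t$ component — which is immediate since the rescaling acts only in the fibre directions. Both statements of the lemma then fall out at once.
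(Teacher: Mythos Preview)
Your argument is correct: the Christoffel symbols you list are the right ones for the warped metric, the shape-operator computation $\bar\nabla_X\partial_t=(h'/h)X$ is accurate, and either the O'Neill warped-product formula or the direct coordinate check you describe yields $\bar R_{ijk0}=0$. The paper does not give any details beyond ``a simple computation permits to obtain,'' so your write-up is exactly the intended routine calculation, just carried out in full.
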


\subsection{Hypersurfaces in warped product manifold $\bar M$}

Given a differentiable function $z:M\longrightarrow I$, its graph is defined as  the hypersuface
\begin{equation}\label{Hyper1}
\Sigma=\{X(u)=(z(u),u)|u\in M \}
\end{equation}
whose tangent space is spanned at each point by the vectors
\begin{equation} \label{tangvect1}
X_i = h\,\bar{e}_i + z_i\,\bar{e}_{0},
\end{equation}
where $z_i$ are the components of the differential $\dd z=
z_i\theta^i$. The unit vector field
\begin{equation}\label{normalvect1}
\nu = \frac{1}{\sqrt{h^2 + |\nabla' z|^2}}\big( \sum_{i=1}^n z^i\bar{e}_i - h\bar{e}_{0}\big)
\end{equation}
is an unit inner normal vector field to $\Sigma$.
Here, $|\nabla' z|^2=z^iz_i$ is the squared norm of $\nabla
'z=z^ie_i$. The components of the induced metric in  $\Sigma$ is given by
\begin{equation}\label{Inmetric}
 g_{ij} = \langle X_i, X_j\rangle = h^2\delta_{ij} +
z_iz_j
\end{equation}
The  second fundamental form  of $\Sigma$ with components $(a_{ij})$   is determined by
\begin{eqnarray*}
	\label{bij1} a_{ij} =\langle \bar\nabla_{X_j}X_i,\nu\rangle=
	\frac{1}{\sqrt{h^2 + |\nabla' z|^2}}\big(-hz_{ij} + 2h'z_iz_j + h^2h'\delta_{ij}\big)
\end{eqnarray*}
where $z_{ij}$ are the components of the Hessian $\nabla'^2z
=\nabla'\dd z$ of $z$ in $M$.

Now we choose the coordinate systems such that  $\{E_0=\nu,E_1,\cdots,E_n\}$ is  an orthonormal
frame field in some open set of  $\Sigma$ and   $\{\omega_0,\omega_1, \ldots, \omega_n\}$ is its associated dual frame. The connection forms $\{\omega_{ij}\}$ and curvature forms $\{\Omega_{ij}\}$ in $\Sigma$ satisfy the structural equations
\begin{eqnarray*}
& & d\omega_i -\sum_j \omega_{ij}\wedge \omega_j=0,\quad \omega_{ij}+\omega_{ji}=0, \\
& &  \dd\omega_{ij} - \sum_k\omega_{ik}\wedge \omega_{kj}=\Omega_{ij}=-\frac12 \sum_{k,l}R_{ijkl}\omega_k\wedge\omega_l.
\end{eqnarray*}
The coefficients $a_{ij}$ of the second fundamental form are given
by Weingarten equation
\begin{equation*}
\label{Weingarten} \omega_{i0}= \sum_ja_{ij}\,\omega_j.
\end{equation*}
The covariant derivative of the second fundamental form $a_{ij}$ in $\Sigma$ is defined by
\begin{eqnarray*}
 \sum_k a_{ijk}\,\omega_k &=& \dd a_{ij}+\sum_la_{il}\,
\omega_{lj}+\sum_la_{lj}\,\omega_{li}, \\
\sum_l a_{ijkl}\,\omega_l&=& \dd a_{ijk}+\sum_l a_{ljk}\,\omega_{li}
+\sum_la_{ilk}\,\omega_{lj}+\sum_la_{ijl}\,\omega_{lk}.
\end{eqnarray*}
The Codazzi equation is a commutation formula for the first order
derivative of $a_{ij}$ given by
\begin{equation}\label{Codazzi}
a_{ijk}-a_{ikj}=-\bar R_{0ijk}
\end{equation}
and the Ricci identity is a  commutation formula for the second order
derivative of $a_{ij}$ given by
\begin{lem} \label{RicciId}
	Let $\bar{X}$ be a point of $\Sigma$ and $\{E_0 =\nu, E_1,\ldots, E_n\}$
	be an adapted frame field such that each $E_i$ is a principal
	direction and $\omega^k_i=0$ at $\bar X$. Let $(a_{ij})$ be the
	second quadratic form of $\Sigma$. Then, at the point $\bar{X}$, we
	have
\begin{equation}\label{RicId4}
\begin{aligned}
a_{llii}=&a_{iill}-a_{lm}\left(a_{mi}a_{il}-a_{ml}a_{ii}\right)-a_{mi}\left(a_{mi}a_{ll}
-a_{ml}a_{li}\right)\\
&+\bar R_{0iil;l}-2a_{ml}\bar{R}_{miil}+a_{il}\bar{R}_{0i0l}+a_{ll}\bar{R}_{0ii0}\\
&+\bar{R}_{0lil;i}-2a_{mi}\bar{R}_{mlil}+a_{ii}\bar{R}_{0l0l}+a_{li}\bar{R}_{0li0}
\end{aligned}
\end{equation}
In particular, we have
\begin{equation}\label{RicId1}
a_{ii11}-a_{11ii}=a_{11}a_{ii}^2-a_{11}^2a_{ii}+2(a_{ii}-a_{11})\bar R_{i1i1}+a_{11}\, \bar
R_{i0i0}-a_{ii} \,\bar R_{1010}+\bar R_{i1i0;1}-\bar
R_{1i10;i}.
\end{equation}
\end{lem}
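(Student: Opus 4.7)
\textbf{Proof plan for Lemma \ref{RicciId}.} The strategy is the standard one: apply Codazzi twice to move indices into the position we want, use the intrinsic Ricci commutation identity on $\Sigma$ to swap the two outermost covariant derivatives, and then invoke the Gauss equation to trade the intrinsic curvature of $\Sigma$ for ambient curvature plus quadratic terms in the second fundamental form. The specialization to \eqref{RicId1} is then immediate once we diagonalize $(a_{ij})$ at $\bar X$.

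\textbf{Step 1 (first Codazzi).} Starting from $a_{ll;i}$, apply the Codazzi identity \eqref{Codazzi} in the form $a_{ll;i}=a_{li;l}+\bar R_{0lil}$, then differentiate once more in the $i$-direction to produce
\begin{equation*}
a_{llii}=a_{li;li}+\bar R_{0lil;i}.
\end{equation*}
The extra term $\bar R_{0lil;i}$ accounts for the covariant derivative (along $\Sigma$) of an ambient curvature component; this is where the ``$;i$'' derivatives of $\bar R$ in \eqref{RicId4} come from.

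\textbf{Step 2 (intrinsic Ricci commutation).} Swap the order of the two outermost derivatives of $a_{li}$ using the Ricci identity for the $(0,2)$-tensor $a$ on $\Sigma$,
\begin{equation*}
a_{li;li}-a_{li;il}=-a_{mi}R^{\Sigma}_{mlli}-a_{lm}R^{\Sigma}_{mili}.
\end{equation*}
At this point the intrinsic Riemann tensor $R^{\Sigma}$ appears quadratically with $a$; this is the source of the pure second-fundamental-form terms and the ``$-2a_{ml}\bar R_{miil}$, $-2a_{mi}\bar R_{mlil}$'' terms in \eqref{RicId4}.

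\textbf{Step 3 (second Codazzi and Gauss).} Apply Codazzi once more to rewrite $a_{li;i}=a_{ii;l}+\bar R_{0lii}$, then differentiate in $l$ to reach $a_{iill}$ up to a derivative of $\bar R$, giving
\begin{equation*}
a_{li;il}=a_{iill}+\bar R_{0lil;i}\text{-type terms}+\bar R_{0lii;l}.
\end{equation*}
Finally replace $R^{\Sigma}$ in the commutator terms via the Gauss equation
\begin{equation*}
R^{\Sigma}_{ijkl}=\bar R_{ijkl}+a_{ik}a_{jl}-a_{il}a_{jk}.
\end{equation*}
Collecting everything yields \eqref{RicId4}. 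The specialization \eqref{RicId1} then follows by choosing the adapted frame at $\bar X$ with $a_{ij}=\kappa_i\delta_{ij}$ and $\omega^k_i=0$, so that cross terms like $a_{ml}a_{mi}a_{il}$ collapse to products of principal curvatures and the ambient curvatures $\bar R_{0ii0}$, $\bar R_{1010}$, $\bar R_{i1i1}$ survive with their natural signs.

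\textbf{Main obstacle.} The only real difficulty is a bookkeeping one: keeping signs and index placements consistent between the Codazzi equation \eqref{Codazzi} (which involves $\bar R_{0ijk}$ with the normal $E_0=\nu$ in the first slot), the intrinsic Ricci commutator on $\Sigma$, and the Gauss equation, and making sure that the ``$;i$'' and ``$;l$'' derivatives of $\bar R$ appear in precisely the combinations shown in \eqref{RicId4}. No new geometric input is needed beyond the Codazzi equation \eqref{Codazzi}, the Gauss equation, and the standard Ricci identity on $\Sigma$.
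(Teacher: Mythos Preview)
The paper states this lemma without proof, so there is no argument to compare against; your outline is the standard derivation and is essentially correct. Two small points worth tightening: (i) in Step~3 the term you wrote as $\bar R_{0lii}$ vanishes by antisymmetry --- the correct Codazzi correction there is $\bar R_{0iil}$ (equivalently $-\bar R_{0ili}$); (ii) the four terms $a_{il}\bar R_{0i0l}$, $a_{ll}\bar R_{0ii0}$, $a_{ii}\bar R_{0l0l}$, $a_{li}\bar R_{0li0}$ in \eqref{RicId4} do not come from the Gauss equation (which only produces tangential components $\bar R_{ijkl}$), but rather from differentiating the Codazzi corrections $\bar R_{0\cdot\cdot\cdot}$ along $\Sigma$: since $\bar\nabla_{E_i}\nu=-a_{ij}E_j$ and $\bar\nabla_{E_i}E_l=\nabla^{\Sigma}_{E_i}E_l+a_{il}\nu$, expanding $E_i\big(\bar R(\nu,E_l,E_l,E_i)\big)$ via the Leibniz rule generates exactly these $a\cdot\bar R_{0\cdot0\cdot}$ contributions together with part of the $-2a_{m\cdot}\bar R_{m\cdot\cdot\cdot}$ terms. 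Once you track those Weingarten contributions your three steps assemble precisely into \eqref{RicId4}, and the diagonalization at $\bar X$ gives \eqref{RicId1} as you say.
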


\subsection{Two functions $\eta$ and $\tau$}
Define the functions $\tau:\Sigma\to \mathbb{R}$ and $\eta:\Sigma\to
\mathbb{R}$  by
\begin{eqnarray}\label{t}
\tau
=-h \ae \nu, \bar e_0\ad=-\ae V, \nu\ad
\quad \textrm{and}\quad \eta=- \int h\, \dd t,
\end{eqnarray}
where $V=h\bar e_0=h\dfrac{\p}{\p t}$ is the position vector field and $\nu$ is the inner unit  normal. Then we have
\begin{lem}\cite{ABdeL2009} The gradient vector fields of the functions $\eta$ and $\tau$ are
	\begin{equation}\label{eta1}
	\nei\eta=-h\ae \bar{e}_0, E_i \ad E_i
	\end{equation}
	\begin{equation}\label{tau1}
	\nei \tau= - \sum_j\nej\eta a_{ij}
	\end{equation}
	and the second order
	derivative of $\tau$ and $\eta$  are given by
	\begin{equation}\label{eta2}
	\nij \eta=\tau a_{ij}-h^{\prime} g_{ij}
	\end{equation}	
	\begin{equation}\label{tau2}
	\begin{aligned}
	\nij  \tau=& -\sum_k\tau a_{ik}a_{kj}+h^{\prime}a_{ij}-\sum_ka_{ikj}\n_{E_k}\eta\\
	=&-\tau \sum_ka_{ik}a_{kj}+h^{\prime}a_{ij}-\sum_k(a_{ijk}+\bar R_{0ijk})\n_{E_k}\eta.
	\end{aligned}
	\end{equation}
\end{lem}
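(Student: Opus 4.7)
The plan is to reduce everything to two ambient facts in the warped product $\bar M$: (i) $V=h\bar e_0$ is a conformal Killing field satisfying $\bar\n_X V=h'X$ for every vector $X$, and (ii) $\eta$ is a primitive of $-h\,\dd t$, so $\bar\n\eta=-V$ and consequently $\bar\n^2\eta(X,Y)=-h'\ae X,Y\ad$. Fact (i) is a direct computation with the warped Christoffel symbols $\Gamma^i_{0j}=(h'/h)\delta^i_j$ and $\Gamma^0_{ij}=-hh'g'_{ij}$ coming from the metric~(\ref{WPmetric}); fact (ii) is immediate from the definition of $\eta$ together with $\bar g(\bar e_0,\bar e_0)=1$.

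Given (i) and (ii), formula (\ref{eta1}) is simply the statement that the tangential part of $\bar\n\eta$ is the intrinsic gradient: differentiating $\eta|_\Sigma$ in the direction $E_i$ gives $\ae\bar\n\eta,E_i\ad=-h\ae\bar e_0,E_i\ad$. Formula (\ref{eta2}) follows from the Gauss formula: for any function restricted from $\bar M$ to $\Sigma$ one has $\nij\eta=\bar\n^2\eta(E_i,E_j)+a_{ij}\ae\bar\n\eta,\nu\ad$; substituting $\bar\n^2\eta=-h'\bar g$ from (ii) and $\ae\bar\n\eta,\nu\ad=-h\ae\bar e_0,\nu\ad=\tau$ yields $\nij\eta=\tau a_{ij}-h'g_{ij}$. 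For (\ref{tau1}), differentiate $\tau=-\ae V,\nu\ad$ directly to obtain
\begin{equation*}
\nei\tau=-\ae\bar\n_{E_i}V,\nu\ad-\ae V,\bar\n_{E_i}\nu\ad.
\end{equation*}
By (i) the first term is $-h'\ae E_i,\nu\ad=0$. The Weingarten equation gives $\bar\n_{E_i}\nu=-\sum_j a_{ij}E_j$, so the second term equals $\sum_j a_{ij}\ae V,E_j\ad$, and by (\ref{eta1}) one has $\ae V,E_j\ad=h\ae\bar e_0,E_j\ad=-\nej\eta$, producing (\ref{tau1}).

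Finally, differentiating (\ref{tau1}) once more gives $\nij\tau=-(\nei a_{jk})\n_{E_k}\eta-\sum_k a_{jk}\nabla^2_{E_i,E_k}\eta$. Substituting (\ref{eta2}) for $\nabla^2_{E_i,E_k}\eta$ yields the first form $-\tau\sum_k a_{ik}a_{kj}+h'a_{ij}-\sum_k a_{ikj}\n_{E_k}\eta$, and applying the Codazzi identity (\ref{Codazzi}) in the form $a_{ikj}=a_{ijk}+\bar R_{0ijk}$ delivers the second form. The only point requiring care is the sign bookkeeping (inward normal in (\ref{normalvect1}), sign of the Weingarten map, and the index placement in Codazzi); once these are pinned down, all four identities follow mechanically from (i) and (ii) with no substantive obstacle.
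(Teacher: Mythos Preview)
Your argument is correct and is exactly the standard derivation of these identities; the paper itself does not give a proof but simply cites \cite{ABdeL2009}, where the same conformal-Killing computation $\bar\nabla_X V=h'X$ together with $\bar\nabla\eta=-V$ is used. One small index slip: from $\nabla_{E_j}\tau=-\sum_k a_{jk}\nabla_{E_k}\eta$ and applying $\nabla_{E_i}$ you obtain the coefficient $\nabla_{E_i}a_{jk}=a_{jki}$, not $a_{ikj}$; to match the paper's first line of (\ref{tau2}) literally you should instead start from $\nabla_{E_i}\tau=-\sum_k a_{ik}\nabla_{E_k}\eta$ and apply $\nabla_{E_j}$ (equivalently, invoke the symmetry of $\nabla^2\tau$), which gives $a_{ikj}$ directly without an extra Codazzi step.
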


\subsection{Basic formulae}
Assume that $\Sigma\longrightarrow \bar{M}$ is the graph defined as
the hypersurface $\Sigma$ whose points are the form
$X(u)=(z(u),u)$ with $u\in M$. This graph is diffeomorphic with $M$
and may be globally oriented by an unit normal vector field $\nu$ for
which it holds that $\ae \nu,\partial_t\ad <0$. Let $\kappa=(\kappa_1, \ldots, \kappa_n)$  be the vector whose components $\kappa_i$ are the principal curvatures of
$\Sigma$, that is, the eigenvalues of the second fundamental form
$B=(\ae \bar{\n}_{i}E_j , \nu\ad)$ in $\Sigma$.

The elementray symmetric function of order $k$ ($1\leq k\leq n$) of  $\kappa=(\kappa_1, \ldots, \kappa_n)$ is as following
\begin{equation}\label{Sigmak1}
\sigma_k=\sum_{i_1<\cdots< i_n}\kappa_{i_1}\cdots\kappa_{i_n}
\end{equation}
Let $\Gamma_k$ be the connected component of $\{\kappa\in  \Bbb R^n|\sigma_m>0, m=1,\cdots, k\}$ containing the positive cone $\{\kappa\in  \Bbb R^n|\kappa_1,\dots,\kappa_n>0\}$.
\begin{defn}\label{AdSolution}
	A positive function $z\in C^2(M^n)$ is said to be admissible  for the operator $\sigma_k$ if for the corresponding hypersurface $\Sigma=\{(z(u),u)|u\in M^n\}$,
	at every point of $\Sigma$ with the normal as in \eqref{normalvect1}, the principal curvatures $\kappa=(\kappa_1,\cdots,\kappa_n)$ is in $\Gamma_k$.
\end{defn}

\begin{lem}(\cite{Andrews,Ball,CNSIII,Gerhardt96})\label{AG}
	Let $F$ be a $C^2$ symmetric function defined in some open set of $Sym(n)$, where $Sym(n)$ is the set of all $n\times n$ symmetric matrices. For any symmetric matrix $(b_{ij})$,    there holds
	$$
	F^{ij, kl}b_{ij}b_{kl}=\sum_{i,j} \frac{\partial^2 f}{\partial
		\kappa_i \partial
		\kappa_j} b_{ii} b_{jj} +\sum_{i\ne j}
	\frac{f_i-f_j}{\kappa_i-\kappa_j}
	b_{ij}^2,
	$$
	where the second term on the right-hand side must be
	interpreted
	as a limit whenever $\kappa_i=\kappa_j$.
\end{lem}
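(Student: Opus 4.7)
The plan is to reduce to the case where the symmetric matrix in question is diagonal, say $A=\mathrm{diag}(\kappa_1,\ldots,\kappa_n)$, and compute the second-order Taylor expansion of $t\mapsto F(A+tB)$ along the line through $A$ in direction $B$ in two different ways. Since $F$ is a symmetric function of eigenvalues, it is $O(n)$-invariant, so the diagonal case determines the identity on all of $Sym(n)$; and since $F^{ij,kl}b_{ij}b_{kl}$ is by definition the coefficient of $t^2$ in $F(A+tB)$ (up to the usual factor), it suffices to differentiate $f(\lambda_1(t),\ldots,\lambda_n(t))$ twice at $t=0$, where $\lambda_i(t)$ denote the eigenvalues of $A+tB$.

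By the chain rule,
\begin{equation*}
\frac{d^2}{dt^2}F(A+tB)\Big|_{t=0}=\sum_{i,j}\frac{\partial^2 f}{\partial\kappa_i\partial\kappa_j}\lambda_i'(0)\lambda_j'(0)+\sum_i\frac{\partial f}{\partial\kappa_i}\lambda_i''(0).
\end{equation*}
Standard first- and second-order perturbation theory for a symmetric matrix with simple eigenvalues gives $\lambda_i'(0)=b_{ii}$ and $\lambda_i''(0)=2\sum_{j\neq i}b_{ij}^2/(\kappa_i-\kappa_j)$. Plugging these in, the first sum becomes $\sum_{i,j}f_{ij}b_{ii}b_{jj}$, exactly the Hessian-of-$f$ contribution in the claim. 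The second sum becomes $2\sum_{i\neq j}f_i b_{ij}^2/(\kappa_i-\kappa_j)$; symmetrizing in the pair $(i,j)$ by swapping the roles of $i$ and $j$ in half of the terms produces
\begin{equation*}
\sum_{i\neq j}\frac{f_i-f_j}{\kappa_i-\kappa_j}\,b_{ij}^2,
\end{equation*}
which is precisely the off-diagonal contribution in the statement.

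The main point I expect to require care is the case of repeated eigenvalues, since the formula for $\lambda_i''(0)$ was derived under simplicity and the denominators $\kappa_i-\kappa_j$ formally vanish in the limit. The key observation that saves the identity is that $f$ is symmetric, so $\partial_{\kappa_i}f=\partial_{\kappa_j}f$ whenever $\kappa_i=\kappa_j$, and hence $(f_i-f_j)/(\kappa_i-\kappa_j)$ extends continuously (in fact smoothly) across the locus where eigenvalues coincide. A clean way to close the argument is to prove the identity on the open dense subset of $Sym(n)$ on which all eigenvalues are simple, then use the $C^2$-regularity of $F$ on $Sym(n)$ and this continuous extension of the coefficients to pass to the limit, which is exactly the interpretation prescribed in the statement. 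Alternatively, one can avoid perturbation of eigenvectors by implicitly differentiating the characteristic polynomial identity $\det(A+tB-\lambda I)=0$, which reproduces the same coefficients without ever assuming the eigenvalues are distinct, providing a unified derivation valid for all $A\in Sym(n)$.
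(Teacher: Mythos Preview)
Your argument is correct and follows the standard approach found in the cited references (Andrews, Ball, Caffarelli--Nirenberg--Spruck, Gerhardt): reduce to a diagonal base point, expand $F(A+tB)$ via eigenvalue perturbation theory to identify $\lambda_i'(0)=b_{ii}$ and $\lambda_i''(0)=2\sum_{j\neq i}b_{ij}^2/(\kappa_i-\kappa_j)$, symmetrize the second sum, and handle repeated eigenvalues by continuity using the symmetry of $f$. Note, however, that the paper itself does not supply a proof of this lemma --- it is simply quoted from the literature --- so there is no ``paper's own proof'' to compare against; your write-up is precisely the kind of proof those references contain.
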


\begin{lem}\cite{GLL,GRW2015}\label{lemma 1}
	Assume that $k>l$, $W=(w_{ij})$ is a Codazzi tensor which is in $\Gamma_k$. Denote $\al=\dfrac{1}{k-l}$.  Then, for $h=1,\cdots, n$, we have the following inequality,
	\begin{eqnarray}\label{1.4}
	&&-\dfrac{\sigma_k^{pp,qq}}{\sigma_k}(W)w_{pph}w_{qqh}+\dfrac{\sigma_l^{pp,qq}}{\sigma_l}(W)
	w_{pph}w_{qqh}\\
	&\geq& \abc{\dfrac{(\sigma_k(W))_h}{\sigma_k(W)}-\dfrac{(\sigma_l(W))_h}{\sigma_l(W)}}
	\abc{(\al-1)\dfrac{(\sigma_k(W))_h}{\sigma_k(W)}-(\al+1)\dfrac{(\sigma_l(W))_h}{\sigma_l(W)}}.\nonumber
	\end{eqnarray}
	Furthermore, for any $\delta>0$, we have
	\begin{eqnarray}\label{1.5}
	&&-\sigma_k^{pp,qq}(W)w_{pph}w_{qqh} +(1-\al+\dfrac{\al}{\delta})\dfrac{(\sigma_k(W))_h^2}{\sigma_k(W)}\\
	&\geq &\sigma_k(W)(\al+1-\delta\al)
	\abz{\dfrac{(\sigma_l(W))_h}{\sigma_l(W)}}^2
	-\dfrac{\sigma_k}{\sigma_l}(W)\sigma_l^{pp,qq}(W)w_{pph}w_{qqh}.\nonumber
	\end{eqnarray}
\end{lem}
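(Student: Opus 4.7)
The lemma is a purely algebraic/analytic statement about symmetric functions $\sigma_k$ evaluated on Codazzi tensors, and it is attributed in the excerpt to \cite{GLL,GRW2015}. The plan is to follow the route used there, adapting the argument to the present notation. The starting point is the standard observation that, after diagonalizing $W$ at a point, the second-derivative terms can be written in terms of $\sigma_{k-2}(W\mid pq)$ for $p\ne q$, so both inequalities reduce to algebraic identities in the curvature components $w_{iih}$ (augmented by the Codazzi symmetry $w_{ijh}=w_{ihj}=w_{hji}$, which lets us freely exchange the three indices in the relevant terms).

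For the first inequality \eqref{1.4}, the idea is to rewrite the quantity
\[
Q_k(W):=-\dfrac{\sigma_k^{pp,qq}}{\sigma_k}(W)w_{pph}w_{qqh}
\]
by using the concavity of $\sigma_k^{1/k}$ on $\Gamma_k$. Concavity gives a clean formula
\[
-\sigma_k^{pp,qq} w_{pph}w_{qqh} \;\geq\; \Bigl(1-\tfrac{1}{k}\Bigr)\frac{(\sigma_k)_h^2}{\sigma_k}
- \sigma_k\,\sigma_k^{pp}\sigma_k^{qq}\frac{(\cdots)}{(\cdots)},
\]
and, combining the two concavity identities for $\sigma_k^{1/k}$ and $\sigma_l^{1/l}$ together with the Newton--Maclaurin type quotient $\bigl(\sigma_k/\sigma_l\bigr)^{1/(k-l)}$ (whose concavity is equivalent to the statement we want with the exponent $\alpha=1/(k-l)$), I would produce the bilinear lower bound. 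Rearranging and completing the square in $(\sigma_k)_h/\sigma_k$ and $(\sigma_l)_h/\sigma_l$ yields exactly the factored right-hand side of \eqref{1.4}.

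For \eqref{1.5}, the plan is to start from \eqref{1.4}, multiply through by $\sigma_k$, and then trade the cross term between $(\sigma_k)_h/\sigma_k$ and $(\sigma_l)_h/\sigma_l$ using Cauchy--Schwarz with weight $\delta$:
\[
2\,\tfrac{(\sigma_k)_h}{\sigma_k}\tfrac{(\sigma_l)_h}{\sigma_l}
\;\le\;\tfrac{1}{\delta}\Bigl(\tfrac{(\sigma_k)_h}{\sigma_k}\Bigr)^{2}
+\delta\Bigl(\tfrac{(\sigma_l)_h}{\sigma_l}\Bigr)^{2}.
\]
Absorbing the $(\sigma_k)_h^2/\sigma_k$ terms into a single coefficient $1-\alpha+\alpha/\delta$ and the $(\sigma_l)_h^2/\sigma_l$ terms into $\sigma_k(\alpha+1-\delta\alpha)$ reproduces \eqref{1.5}. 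The $\sigma_k^{pp,qq}/\sigma_l^{pp,qq}$ swap then only needs the identity $\sigma_k/\sigma_l\cdot\sigma_l^{pp,qq}w_{pph}w_{qqh}$ to be moved to the right, which is algebraic.

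The main obstacle is purely bookkeeping: tracking exactly how the factor $\alpha=1/(k-l)$ arises from the $(k-l)$-th power of the concavity of $(\sigma_k/\sigma_l)^{1/(k-l)}$, and checking that all of the Codazzi-symmetry permutations of indices are used consistently so that no extra curvature-type remainder appears on the left. Once the concavity of $(\sigma_k/\sigma_l)^{1/(k-l)}$ on $\Gamma_k$ is invoked correctly, \eqref{1.4} is almost immediate, and \eqref{1.5} then follows by the weighted Cauchy--Schwarz step described above; this mirrors the derivation in \cite{GRW2015}, which we can cite rather than reproduce in full.
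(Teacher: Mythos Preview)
The paper does not give its own proof of this lemma; it is simply stated with a citation to \cite{GLL} and \cite{GRW2015}. Your sketch is essentially the argument found in those references and is correct: inequality \eqref{1.4} is precisely the concavity of the quotient $(\sigma_k/\sigma_l)^{1/(k-l)}$ on $\Gamma_k$, written out in eigenvalue coordinates with $\xi_p=w_{pph}$. Indeed, if $G=(\sigma_k/\sigma_l)^{\alpha}$ with $\alpha=1/(k-l)$, then expanding $\sum_{p,q}G_{pq}\xi_p\xi_q\le 0$ and dividing by $\alpha G$ gives
\[
-\frac{\sigma_k^{pp,qq}}{\sigma_k}\xi_p\xi_q+\frac{\sigma_l^{pp,qq}}{\sigma_l}\xi_p\xi_q
\;\ge\;\alpha(a-b)^2-a^2+b^2,
\]
where $a=(\sigma_k)_h/\sigma_k$ and $b=(\sigma_l)_h/\sigma_l$; and one checks algebraically that $\alpha(a-b)^2-a^2+b^2=(a-b)\bigl((\alpha-1)a-(\alpha+1)b\bigr)$, which is the right side of \eqref{1.4}. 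Your derivation of \eqref{1.5} from \eqref{1.4} via the weighted inequality $2\alpha ab\le(\alpha/\delta)a^2+\delta\alpha b^2$ is also correct and matches the computation in \cite{GRW2015}.

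Two minor corrections. First, the Codazzi hypothesis on $W$ plays no role in \eqref{1.4} or \eqref{1.5} as stated: only the quantities $w_{pph}$ appear, so no permutation of indices on third derivatives is needed. The Codazzi property matters in the \emph{applications} of the lemma (e.g.\ in Sections~6 and~7, where off-diagonal terms $a_{pql}$ are symmetrized), not in its proof. Second, your intermediate display invoking the separate concavities of $\sigma_k^{1/k}$ and $\sigma_l^{1/l}$ is unnecessary and a bit muddled (the trailing $(\cdots)/(\cdots)$ has no content); the clean route is to use the concavity of $(\sigma_k/\sigma_l)^{1/(k-l)}$ directly, as you in fact end up doing.
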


\section{Gradient Estimates}
In this section, we follow the idea of \cite{CNSIV} and \cite{GLM}
to derive $C^1$ estimates for the height function $z$. In other words, we are looking for a lower bound for the support function $\tau$. Firstly, we need the following technical assumption:
\be\label{tech}
\frac{\partial}{\partial t}(h(t)^k\psi(V, \nu))\leq 0,\,\,\mbox{where $V=h(t)\dfrac{\p}{\p t}.$}
\ee
\begin{lem}\label{FirstEst}
	Let $\Sigma$ be a graph in $\bar{M}=I\times_h M$ satisfying \eqref{WeinEqu},\eqref{tech} and let $z$ be the height function of $\Sigma$.
	If $h$ has positive lower and upper bounds, then there is a constant C depending on the minimum and maximum values of $z$ such that
	\begin{equation}\label{GEst}
	|\nabla z|\leq C.
	\end{equation}
\end{lem}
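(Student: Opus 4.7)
The goal is to establish a positive lower bound for the support function $\tau = -\langle V,\nu\rangle$, since the formula (2.10) for $\nu$ gives $\tau = h^{2}/\sqrt{h^{2}+|\nabla'z|^{2}}$, and with $h$ bounded above and away from $0$ on the range of $z$, such a bound is equivalent to the required upper bound for $|\nabla'z|$. My strategy is to apply the maximum principle to the auxiliary function
\[
\Phi \;=\; -\log\tau \;-\; \lambda\,\eta,
\]
where $\eta=-\int h\,dt$ is the potential from (2.11) and $\lambda>0$ is a constant to be chosen; since $z$ is bounded, so is $\eta$, and an upper bound on $\Phi$ yields the desired lower bound on $\tau$.

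Suppose $\Phi$ attains its maximum at $p_{0}\in\Sigma$, and at $p_{0}$ fix an orthonormal frame $E_{1},\dots,E_{n}$ that diagonalises the second fundamental form; let $F^{ij}=\partial\sigma_{k}/\partial a_{ij}$, which is diagonal and positive definite on $\Gamma_{k}$. The first-order condition together with the diagonal version of (2.13), $\tau_{i}=-a_{ii}\eta_{i}$, forces $a_{ii}=\lambda\tau$ whenever $\eta_{i}\neq 0$. For the Hessian condition $F^{ij}\Phi_{ij}\le 0$, I would substitute (2.12) and (2.14), use Euler's identity $F^{ij}a_{ij}=k\psi$, and invoke the Codazzi equation (2.16) together with the tangential derivative of (1.1), namely $F^{ij}a_{kij}=E_{k}(\psi)=\partial_{t}\psi\,\langle E_{k},\bar e_{0}\rangle+(\nabla_{M}\psi)_{k}+D_{\nu}\psi\cdot\nu_{k}$. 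Contracting the $\partial_{t}\psi$ piece against $\eta_{k}=-h\langle E_{k},\bar e_{0}\rangle$ and using the identity $\sum_{k}\langle E_{k},\bar e_{0}\rangle^{2}=1-\tau^{2}/h^{2}$, this term recombines with the singular contribution $-kh'\psi/\tau$ to form
\[
\frac{kh'\psi+h\partial_{t}\psi}{\tau}\;=\;\frac{1}{\tau\,h^{k-1}}\,\partial_{t}\!\big(h^{k}\psi\big),
\]
which is non-positive by the technical hypothesis (3.1). This cancellation is the heart of the argument: it eliminates the only term in the inequality that blows up as $\tau\to 0$.

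After that absorption, the remaining Hessian inequality reads, schematically,
\[
\sum_{i}F^{ii}a_{ii}^{2}+\lambda h'\sum_{i}F^{ii}\;\le\;C\sum_{i}F^{ii}+C\lambda+\frac{C}{\tau}F^{ij}\bar{R}_{0ijk}\eta_{k}+\frac{D_{\nu}\psi\cdot\nu_{k}\eta_{k}}{\tau},
\]
with the constants depending only on the ambient geometry of $\bar M$ and $\|\psi\|_{C^{1}}$. The main remaining obstacle is the term $D_{\nu}\psi\cdot\nu_{k}\eta_{k}/\tau$, which through $\nu_{k}=-a_{kk}E_{k}$ introduces a factor of $a_{kk}$; I would absorb it into $\sum_{i}F^{ii}a_{ii}^{2}$ via Cauchy--Schwarz, at the cost of enlarging $\lambda$. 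The warped-product curvature remainder $F^{ij}\bar{R}_{0ijk}\eta_{k}/\tau$ is similarly absorbed into $\lambda h'\sum_{i}F^{ii}$ by taking $\lambda$ large enough, which is permissible since $h'>0$ is bounded below on the range of $z$ and $\sum_{i}F^{ii}$ admits a positive lower bound on $\Gamma_{k}$ (via Maclaurin applied to $\psi\ge\inf\psi>0$). This forces $\tau(p_{0})$ to be bounded below by a positive constant depending only on the listed data, completing the proof of (3.2).
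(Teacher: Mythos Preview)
Your overall architecture—maximum principle applied to an auxiliary function built from $\log\tau$ and the potential $\eta$, with the technical hypothesis \eqref{tech} used to kill the only genuinely singular term—is exactly the paper's. Your identification of the cancellation
\[
\frac{kh'\psi+h\partial_t\psi}{\tau}=\frac{1}{\tau\,h^{k-1}}\partial_t(h^k\psi)
\]
is the heart of the matter, and your claim that the warped-product curvature remainder $\tfrac{1}{\tau}F^{ij}\bar R_{0ijk}\eta_k$ is harmless is also correct, though it needs justification: one must use Lemma~2.1 ($\bar R_{ijk0}=0$ in the $\bar e$-frame) to see that $\bar R(\nu,E_i,E_i,\bar e_0)$ carries a hidden factor of $\tau$, exactly as the paper computes in~(4.13).

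The genuine gap is the last step. With your test function $\Phi=-\log\tau-\lambda\eta$ (so $\gamma'=\lambda>0$), the first-order condition gives $a_{11}=\lambda\tau>0$ in the direction of $\nabla\eta$. After all absorptions your inequality reads schematically
\[
\sum_i F^{ii}a_{ii}^2+\lambda^2 F^{11}|\nabla\eta|^2+(\lambda h'-C)\sum_i F^{ii}\le C\lambda+C,
\]
but nothing here forces a lower bound on $\tau$: for intermediate $k$ the coefficient $F^{11}=\sigma_{k-1}(\kappa|1)$ has no a priori positive lower bound when $\kappa_1>0$, so the $\lambda^2$ term cannot beat the linear $C\lambda$ coming from $d_\nu\psi$, and the remaining inequality merely bounds $\sum_i F^{ii}$ above. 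The paper avoids this by taking $\gamma(t)=\alpha/t$, so that $\gamma'<0$ and hence $a_{11}=\tau\gamma'<0$; this single sign is what produces the crucial lower bound $\sigma_k^{11}\ge\sigma_{k-1}\ge c(\inf\psi)^{(k-1)/k}$ via Newton--Maclaurin (see~(4.12)). With that in hand, $\sum_iF^{ii}=(n-k+1)\sigma_{k-1}\le (n-k+1)\sigma_k^{11}$, and the $\alpha^2\sigma_k^{11}|V|^2$ term dominates every $O(\alpha)$ term, yielding a contradiction to $\langle V,E_1\rangle\neq 0$ for $\alpha$ large. Your linear choice (with the sign you picked) does not reproduce this mechanism; either reverse the sign of the $\eta$-coefficient or adopt the paper's $\gamma$ so that the relevant principal curvature becomes nonpositive at the maximum.
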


\begin{proof}Set  $\chi(z)= -\ln (\tau)+\gamma(-\eta(t))$, where $\gamma$ is a single variable function to be determined later.
	Assume that $\chi$ achieve its maximum value at point $u_0$. We claim that $V$ is parallel to its normal $\nu$ at $u_0$ if we choose suitable $\gamma$. We will prove it by contradiction. If not, we can choose a local orthonormal basis $\{E_i\}_{i=1}^n$ such that $\left<V, E_1\right>\neq 0,$ and $\left<V, E_i\right>=0,\, i\geq 2$. Obviously, $V=\left<V, E_1\right>E_1+\left<V, \nu\right>\nu$. At point $u_0$, by the maximum principal we have
	\begin{eqnarray}\label{G1}
	0&=&\n_{E_i}\chi(z)=-\frac{\nei \tau}{\tau}-\gamma^{\prime}\nei \eta\\
	0&\geq &\nii\chi(z)\\
	&=& -\frac{\nii\tau}{\tau}+\frac{|\nei \tau|^2}{\tau^2}-\gamma^{\prime}\nii\eta+\gamma^{\prime\prime}|\nei \eta|^2\nonumber
	\end{eqnarray}
	From \eqref{tau1}, (\ref{tau2}) and (\ref{G1}), we have
	\begin{equation}\label{G3}
	\begin{aligned}
	0\geq & -\frac{\nii\tau}{\tau}+\frac{|\nei \tau|^2}{\tau^2}-\gamma^{\prime}\nii\eta+\gamma^{\prime\prime}|\nei \eta|^2\\
	=&-\frac{1}{\tau}\left(-\tau a_{il}a_{li}+h^{\prime}a_{ii}-(a_{iil}+\bar R_{0iil})\eta_l\right)+\left(\gamma^{\prime\prime}+(\gamma^{\prime})^2\right)\eta_i^2-\gamma^{\prime}\left(\tau a_{ii}-h^{\prime} g_{ii}\right)
	\end{aligned}
	\end{equation}
	By \eqref{tau1} and (\ref{G1}), we get
	\begin{equation}\label{G4}
	a_{11}=\tau \gamma^{\prime},\qquad a_{i1}=0,\qquad i\geq 2.
	\end{equation}
	Therefore, it is possible to rotate the coordinate system such that $\{E_i\}_{i=1}^n$ are the principal curvature directions of the second fundamental form $(a_{ij})$, i.e. $a_{ij}=a_{ii}\delta_{ij}$, which means that $(\sigma_k^{ij})$ is also diagonal.  By multiplying $\sigma_k^{ii}$ in the inequality (\ref{G3}) both sides and taking sum on $i$ from $1$ to $n$,  one gets from (\ref{G3}) and (\ref{G4})
	\begin{equation}\label{G5}
	\begin{aligned}
	0\geq &\sigma_k^{ii}a_{ii}^2-\frac{1}{\tau}h^{\prime}\sigma_k^{ii}a_{ii}+\frac{1}{\tau}\sigma_k^{ii}(a_{iil}+\bar R_{0iil})\eta_l+\left(\gamma^{\prime\prime}+(\gamma^{\prime})^2\right)\sigma_k^{ii}\eta_i^2-\gamma^{\prime}\left(\tau \sigma^{ii}a_{ii}-h^{\prime} \sum_{i=1}^n\sigma_k^{ii}\right)\\
	=&\sigma_k^{ii}a_{ii}^2+\frac{1}{\tau}\sigma_k^{ii}a_{ii1}\eta_1+\frac{1}{\tau} \sigma_k^{ii}\bar R_{0ii1}\eta_1+\left(\gamma^{\prime\prime}+(\gamma^{\prime})^2\right)\sigma_k^{11}\eta_1^2+\gamma^{\prime}h^{\prime} \sum_{i=1}^n\sigma_k^{ii}-\gamma^{\prime}\tau k\psi-\frac{1}{\tau}h^{\prime}k\psi
	\end{aligned}
	\end{equation}
	where $F^{ii}a_{ii}= k\psi$ is used.
	Differentiating equation \eqref{WeinEqu} with respect to $E_1$ we obtain
	\be\label{G5}
	\sigma_k^{ii}a_{ii1}=d_V\psi(\n_{E_1}V)-a_{11}d_{\nu}\psi(E_1).
	\ee
	
	Putting (\ref{G4}) and (\ref{G5}) into (\ref{G3}) yields
	\begin{equation}\label{G6}
	\begin{aligned}
	0\geq &	\sigma_k^{ii}a_{ii}^2+\frac{1}{\tau}\Big(d_V\psi(\n_{E_1}V)-a_{11}d_{\nu}\psi(E_1)\Big)\eta_1+\frac{1}{\tau} \sigma_k^{ii}\bar R_{0ii1}\eta_1\\
	&+\left(\gamma^{\prime\prime}+(\gamma^{\prime})^2\right)\sigma_k^{11}\eta_1^2+\gamma^{\prime}h^{\prime} \sum_{i=1}^n\sigma_k^{ii}-\gamma^{\prime}\tau k\psi-\frac{1}{\tau}h^{\prime}k\psi\\
	= &	\sigma_k^{ii}a_{ii}^2-\frac{1}{\tau}\Big(kh^{\prime}\psi+\langle V,E_1 \rangle d_V\psi(\n_{E_1}V)\Big)+\gamma^{\prime} d_{\nu}\psi(E_1)\langle V,E_1 \rangle+\frac{1}{\tau} \sigma_k^{ii}\bar R_{0i1i}\langle V,E_1 \rangle\\
	&+\left(\gamma^{\prime\prime}+(\gamma^{\prime})^2\right)\sigma_k^{11}\langle V,E_1 \rangle^2-k\gamma^{\prime}\tau \psi+\gamma^{\prime}h^{\prime} \sum_{i=1}^n\sigma_k^{ii}.
	\end{aligned}
	\end{equation}
	 Since $V=\langle V,E_1\rangle E_1+\langle V, \nu\rangle\nu$, we have
	\begin{equation}\label{G7}
	d_V\psi(V, \nu)=\langle V, E_1\rangle d_V\psi(\nabla_{E_1}V)+\langle V, \nu\rangle d_V\psi(\nabla_{\nu}V).
	\end{equation}
	Putting (\ref{G7}) into (\ref{G6}) gets
	\begin{equation}\label{4.11}
	\begin{aligned}
	0\geq &\sigma_k^{ii}a_{ii}^2-\frac{1}{\tau}\left(kh^{\prime}\psi+ d_V\psi(V, \nu)\right)+\gamma^{\prime} d_{\nu}\psi(E_1)\langle V,E_1 \rangle+\frac{1}{\tau} \sigma_k^{ii}\bar R_{0i1i}\langle V,E_1 \rangle\\
	&+\left(\gamma^{\prime\prime}+(\gamma^{\prime})^2\right)\sigma_k^{11}\langle V,E_1 \rangle^2-k\gamma^{\prime}\tau \psi+\gamma^{\prime}h^{\prime} \sum_{i=1}^n\sigma_k^{ii}+ d_V\psi(\nabla_{\nu}V)\\
	\geq &\sigma_k^{ii}a_{ii}^2+\left(\gamma^{\prime\prime}+(\gamma^{\prime})^2\right)\sigma_k^{11}\langle V,E_1 \rangle^2+\gamma^{\prime}h^{\prime} \sum_{i=1}^n\sigma_k^{ii}+\frac{1}{\tau} \sigma_k^{ii}\bar R_{0i1i}\langle V,E_1 \rangle\\
	&+\gamma^{\prime} d_{\nu}\psi(E_1)\langle V,E_1 \rangle-k\gamma^{\prime}\tau \psi+ d_V\psi(\nabla_{\nu}V),
	\end{aligned}
	\end{equation}
	where we use the assumption \eqref{tech}.
	Choosing the function $\gamma(t)=\dfrac{\alpha}{t}$ for a positive constant $\alpha$, we have
	\begin{equation}\label{}
	\begin{aligned}
	\gamma^{\prime}(t)=-\frac{\alpha}{t^2},\qquad  \gamma^{\prime\prime}(t)=\frac{2\alpha}{t^3}.
	\end{aligned}
	\end{equation}
	By \eqref{G4} and the choice of function $\gamma$, we have $a_{11}\leq 0$.  Thus, the Newton-Maclaurin inequality  imply
	\begin{equation}\label{NM2}
        \sigma_k^{11}\geq \sigma_{k-1}\geq \frac{k}{(n-k+1)(k-1)}(C_n^k)^{\frac1k} \psi^{\frac{k-1}{k}}.
	\end{equation}
	
	Therefore by the previous three inequalities, we have
	\begin{equation}\label{G8}
	\begin{aligned}
	0 \geq &\sigma_k^{11}a_{11}^2+\left(\frac{\alpha^2}{t^4}+\frac{2\alpha}{t^3}\right)\sigma_k^{11}\langle V,E_1 \rangle^2-\frac{\alpha}{t^2}h^{\prime} \sum_{i=1}^n\sigma_k^{ii}+\frac{1}{\tau} \sigma_k^{ii}\bar R_{0i1i}\langle V,E_1 \rangle\\
	&-\frac{\alpha}{t^2} d_{\nu}\psi(E_1)\langle V,E_1 \rangle+\frac{\alpha}{t^2}k\tau \psi+ d_V\psi(\nabla_{\nu}V)
	\end{aligned}
	\end{equation}
	Since $V=\left<V, E_1\right>E_1+\left<V, \nu\right>\nu$,  one can find that $V \perp \text{Span}\{E_2,\cdots,E_n\}$. One the other hand, $E_1, \nu\perp\text{Span}\{E_2,\cdots,E_n\}$. It is possible to choose coordinate systems such that $\bar e_1\perp\text{Span}\{E_2,\cdots,E_n\}$, which implies that the pair $ \{V, \bar e_1\} $ and $\{\nu,E_1\}$ lie in the same plane and $$\text{Span}\{E_2,\cdots,E_n\}=\text{Span}\{\bar e_2,\cdots,\bar e_n\}.$$
	Therefore, we can choose $E_2=\bar e_2,\cdots,E_n=\bar e_n$. The vector $\nu$ and $E_1$ can decompose to
	\begin{equation*}
	\begin{aligned}
	\nu=&\langle \nu,\bar e_0\rangle \bar e_0 +\langle \nu,\bar e_1\rangle \bar e_1
	=-\frac{\tau}{h}\bar e_0 +\langle \nu,\bar e_1\rangle \bar e_1,\\
	E_1=&\langle E_1,\bar e_0\rangle \bar e_0 +\langle E_1,\bar e_1\rangle \bar e_1,\\
	\end{aligned}
	\end{equation*}
	For (\ref{Curvature0}) and $V=\left<V, E_1\right>E_1+\left<V, \nu\right>\nu $, we obtain
	\begin{equation}\label{Curvature1}
	\begin{aligned}
	\bar R_{0i1i}=& \bar R(\nu,E_i,E_1,E_i)\\
	=&-\frac{\tau}{h} \langle E_1,\bar e_0\rangle \bar R(\bar e_0,\bar e_i,\bar e_0,\bar e_i)+\langle \nu,\bar e_1\rangle
	\langle E_1,\bar e_1\rangle \bar R(\bar e_1,\bar e_i,\bar e_1,\bar e_i)\\
	=&-\frac{\tau}{h}\langle E_1,\bar e_0\rangle \bar R(\bar e_0,\bar e_i,\bar e_0,\bar e_i)-\tau \frac{\langle \nu,\bar e_1\rangle^2}{\langle E_1,V\rangle}\bar R(\bar e_1,\bar e_i,\bar e_1,\bar e_i)\\
	=&\tau \left(-\frac{1}{h}\langle E_1,\bar e_0\rangle \bar R(\bar e_0,\bar e_i,\bar e_0,\bar e_i)- \frac{\langle \nu,\bar e_1\rangle^2}{\langle E_1,V\rangle}\bar R(\bar e_1,\bar e_i,\bar e_1,\bar e_i)\right).
	\end{aligned}
	\end{equation}
	The third equality comes from $0=\langle V, \bar e_1 \rangle $. From \eqref{G4}, \eqref{NM2} and (\ref{Curvature1}), \eqref{G8} becomes
	\begin{eqnarray}
	0&\geq& \alpha^2\sigma_k^{11}(\tau^2(\gamma')^2+\frac{\alpha^2}{t^4}\langle V, E_1\rangle^2)-C_1\alpha\sigma_{k-1}-C_2\alpha|d_{\nu}\psi(e_1)|-|d_V\psi(\nabla_{\nu}V)|\nonumber\\
	&\geq& C\alpha^2|V|^2\sigma_k^{11}-C_1\alpha\sigma_k^{11}-C_2\alpha|d_{\nu}\psi(e_1)|-|d_V\psi(\nabla_{\nu}V)|\nonumber
	\end{eqnarray}	
	where $C,C_1,C_2$ depends on  $k,n$, the $C^0$ bound of $h$ and the curvature $\bar R$.
	Thus, we have contradiction when $\alpha$ is large enough. Hence, $V$ parallel to the normal which implies the lower bound of $\tau$.
	\end{proof}

\section{$C^2$ estimates for $\sigma_2$}
In this section,
we  study the solution of the following normalized equation
\be\label{NPE}
F(b)= \left( \begin{array}{c}n\\2 \end{array} \right)^{(-1/2)}\sigma_2(\kappa(a))^{1/2}
=f(\kappa(a_{ij}))=\overline{\psi}(V,\,\nu).
\ee
Now we can prove the $C^2$ estimate for 2-convex hypersurfaces.
\begin{thm}\label{S2C2}
Under the assumption of Theorem \ref{MainThm1},  there is a constant $C$ depending only on $n,k,t_-,t_+$, the $C^1$ bound of $z$  and $|\bar{\psi}|_{C^2}$, such that
	\be\label{p.10}
	\max_{u\in M}|\kappa_i(u)|\leq C.
	\ee
\end{thm}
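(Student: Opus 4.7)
The plan is to adapt the $2$-convex curvature estimate of \cite{GRW2015} to the warped-product setting, using the support function $\tau$ and the potential $\eta$ of Section~2 as substitutes for the position-related quantities employed in $\mathbb{R}^{n+1}$. I would test
\[
W \;=\; \log \kappa_{\max} \;-\; N \log \tau \;+\; \beta\,(-\eta),
\]
for positive constants $N,\beta$ to be fixed. By the $C^1$ bound of Section~3, $\tau$ has a positive lower bound and $\eta$ is bounded, so an upper bound on $W$ is equivalent to an upper bound on $\kappa_{\max}$. If $\kappa_{\max}$ fails to be simple at the maximum, the standard symmetric perturbation of $(a_{ij})$ (as in \cite{CNSIV}) makes $W$ smooth without affecting the computation.

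Suppose $W$ attains its maximum at $x_0 \in \Sigma$, and choose an orthonormal frame $\{E_i\}$ diagonalizing $(a_{ij})$ at $x_0$ with $a_{11}=\kappa_{\max}$. Using $\nabla_i\tau = -\sum_j\eta_j a_{ij}$ and $\nabla_i\eta = -h\langle\bar e_0,E_i\rangle$ from Lemma~2.4, the first-order condition $\nabla_i W = 0$ reduces to
\[
\frac{a_{11i}}{a_{11}} \;=\; \frac{N}{\tau}\,\eta_i a_{ii} \;-\; \beta\,\eta_i .
\]
Applying $L := F^{ii}\nabla_i\nabla_i$ to $W$ and using the Hessian formulas (2.11)--(2.12), the condition $LW \le 0$ becomes, after dropping terms that are uniformly bounded (in terms of $|\bar\psi|_{C^2}$, $|\bar R|$, and the $C^1$ data):
\[
\frac{F^{ii}a_{11ii}}{a_{11}} \;-\; \frac{F^{ii}a_{11i}^2}{a_{11}^2} \;+\; \frac{N}{\tau}\sum_i F^{ii}\bigl(\tau a_{ii}^2 - h' a_{ii}\bigr) \;+\; \beta\,h'\sum_i F^{ii} \;-\; \beta\tau\, F^{ii}a_{ii} \;\le\; O(1).
\]
One then commutes $a_{11ii}\to a_{ii11}$ via the Ricci identity (2.9) and differentiates the equation $F(a_{ij})=\overline\psi(V,\nu)$ twice in $E_1$. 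This rewrites $F^{ii}a_{11ii}$ as
\[
-F^{pq,rs}a_{pq1}a_{rs1} \;+\; a_{11}\!\sum_i F^{ii}a_{ii}^2 \;-\; a_{11}^2\!\sum_i F^{ii}a_{ii} \;+\; R_1,
\]
where $R_1$ collects terms from derivatives of $\overline\psi$ and from $\bar R$-contractions (including the Codazzi defect $\bar R_{0ijk}$ in (2.8)); these are bounded linearly in $a_{11}$ and in $a_{11i}$. The concavity inequality Lemma~2.6 with $k=2,\ l=1$ applied to $F=(\binom{n}{2})^{-1/2}\sigma_2^{1/2}$ then controls $-F^{pq,rs}a_{pq1}a_{rs1}$ by a positive multiple of $\sum_i F^{ii}a_{ii}^2$, which is the decisive strengthening of mere concavity that makes the $2$-convex case go through.

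The main obstacle will be absorbing the extra curvature terms, absent in \cite{GRW2015}, that arise from the failure of Codazzi in warped products and from the $\bar R$-contributions in (2.9). Each such term is at worst of the form $|\bar R|\cdot a_{11}$ or $|\bar R|\cdot|a_{11i}|$; after applying the first-order substitution displayed above and a Cauchy--Schwarz splitting, they are bounded by $\varepsilon\,a_{11}\sum_i F^{ii}a_{ii}^2 + C_\varepsilon\,\kappa_{\max}\sum_i F^{ii}$. The linear-in-$\kappa_{\max}$ leftovers are absorbed by $\beta\,h'\sum_i F^{ii}$, which is strictly positive by the hypothesis $h'>0$, while the quadratic parts are absorbed by the good term $\frac{N}{\tau}\sum_i F^{ii}\tau a_{ii}^2$. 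Choosing $\beta$ then $N$ large (depending on $n,k,t_\pm,|z|_{C^1},|\bar\psi|_{C^2}$ and $\sup_{\bar M_-^+}|\bar R|$), the resulting inequality forces $a_{11}(x_0) \le C$, giving \eqref{p.10}.
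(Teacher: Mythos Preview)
Your outline misidentifies the central mechanism. The claim that ``the concavity inequality Lemma~\ref{lemma 1} with $k=2,\,l=1$ \ldots controls $-F^{pq,rs}a_{pq1}a_{rs1}$ by a positive multiple of $\sum_i F^{ii}a_{ii}^2$'' is false: that lemma bounds the diagonal Hessian part $-\sigma_k^{pp,qq}a_{pp1}a_{qq1}$ in terms of \emph{third}-order quantities $(\sigma_k)_1,(\sigma_l)_1$, not in terms of the zeroth-order expression $\sum F^{ii}\kappa_i^2$. No such inequality exists; the third-derivative terms cannot be converted into curvature terms. In the $2$-convex (not convex) situation some $\kappa_i$ may be negative, and the dangerous term is $\kappa_1^{-2}\sum_i F^{ii}a_{11i}^2$, where $F^{ii}$ can be large precisely when $\kappa_i$ is small. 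Your first-order substitution gives $\kappa_1^{-2}F^{ii}a_{11i}^2 = F^{ii}(N\tau^{-1}\kappa_i\eta_i-\beta\eta_i)^2$, whose leading part $\sim N^2 F^{ii}\kappa_i^2\eta_i^2/\tau^2$ cannot be absorbed by the good term $N\sum F^{ii}\kappa_i^2$ once $N$ is large; so the ``choose $N$ large'' step actually makes the inequality worse.

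What the paper (following \cite{GRW2015}) actually does is a two-case argument. If $\kappa_n\le -\theta\kappa_1$ one simply uses concavity $-F^{pq,rs}a_{pq1}a_{rs1}\ge 0$ and exploits $F^{nn}\kappa_n^2\ge \tfrac{\theta^2}{n}\kappa_1^2\sum F^{ii}$. If $\kappa_n\ge -\theta\kappa_1$ one splits indices into $I=\{j:F^{jj}\le 4F^{11}\}$ and $J=\{j:F^{jj}>4F^{11}\}$; for $j\in J$ one uses Lemma~\ref{AG} (the Andrews formula, not Lemma~\ref{lemma 1}) together with the pointwise inequality $-\tfrac{2}{\kappa_1}\tfrac{f_1-f_j}{\kappa_1-\kappa_j}\ge \tfrac{F^{jj}}{\kappa_1^2}$ (valid for $\sigma_2^{1/2}$ with $\theta=\tfrac12$) to cancel $\kappa_1^{-2}\sum_{j\in J}F^{jj}a_{11j}^2$, and finally invokes $\sum_i F^{ii}\ge C\kappa_1$. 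The test function also uses $-\log(\tau-a)$ with coefficient $1$ (not $N$), so that the good term $(\tau_i/(\tau-a))^2$ exactly matches, up to $1+\epsilon$, the bad part of $\kappa_1^{-2}a_{11i}^2$ coming from the critical equation. Your proposal is missing this entire structure; without the case split and the Andrews-type cancellation for $j\in J$, the argument does not close in the merely $2$-convex regime.
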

\begin{proof}
	Define the function
	\begin{equation}\label{Test}
	W(u,\xi)=e^{-\beta \eta}\frac{B(\xi,\xi)}{\tau-a}
	\end{equation}
	where $\tau \geq 2a$ and $\beta$ is a large constant to be chosen, $\xi$ is a tangent vector of $\Sigma$ and $B$ is the second fundamental form.
	Assume that $W$ is achieved  at $X_0=(z(u_0),\,u_0)$ along $\xi$,
	and we may choose a local orthonormal frame $E_1, \dots, E_n$ around $X_0$ such that  $\xi=E_1$ and $a_{ij} (X_0) = \kappa_i \delta_{ij}$,
	where $\kappa_1\geq \kappa_2\geq \ldots\geq \kappa_n$ are the principal curvatures of $\Sigma$
	at $u_0$.	Thus at $u_0,\, \ln W=\ln{a_{11}}-\log{(\tau-a)}-\beta\eta$
	has a local maximum.  Therefore,
	\be\label{p.11}
	0=\frac{a_{11i}}{a_{11}}-\frac{\nabla_i\tau}{\tau-a}-\beta\eta_i,
	\ee
	and
	\be\label{p.12}
	0\geq\frac{a_{11ii}}{a_{11}}-\left(\frac{a_{11i}}{a_{11}}\right)^2
	-\frac{\nabla_{ii}\tau}{\tau-a}+\left(\frac{\nabla_i\tau}{\tau-a}\right)^2-\beta\eta_{ii}.
	\ee
Multiplying  $F^{ii}$  both sides in (\ref{p.12}) and using \eqref{eta1}-\eqref{tau2}, we have
	\be\label{p.13}
	\begin{aligned}
		0\geq& \frac{1}{\kappa_1} F^{ii}a_{11ii}-\frac{1}{\kappa_1^2}F^{ii}\left(a_{11i}\right)^2-
		\frac{1}{\tau-a}F^{ii}\tau_{ii}+F^{ii}\left(\frac{\tau_i}{\tau-a}\right)^2-\beta F^{ii}\eta_{ii}\\
		=&\frac{1}{\kappa_1} F^{ii}a_{11ii}-\frac{1}{\kappa_1^2}F^{ii}\left(a_{11i}\right)^2
		+\frac{\tau}{\tau-a}F^{ii}\kappa_i^2-\frac{h^{\prime}}{\tau-a}\bar\psi+\frac{1}{\tau-a}\sum_lF^{ii}(a_{iil}+\bar R_{0iil})\eta_l\\
		&+\sum_iF^{ii}\left(\frac{\kappa_i\eta_i}{\tau-a}\right)^2-\beta\tau \bar\psi+h^{\prime}\beta\sum_{i=1}^nF^{ii}.
	\end{aligned}
	\ee
The Ricci identity (\ref{RicId4}) yields
	\begin{equation}\label{C1}
	\begin{aligned}
	F^{ii}a_{ii11}-F^{ii}a_{11ii}=&a_{11}F^{ii}a_{ii}^2-a_{11}^2F^{ii}a_{ii}+2F^{ii}(a_{ii}-a_{11})\bar R_{i1i1}+a_{11}\, F^{ii} \bar R_{i0i0}-F^{ii}a_{ii} \,\bar R_{1010}\\
	&+F^{ii}
	\bar R_{i1i0;1}-F^{ii}\bar R_{1i10;i}\\
	\geq& -C_1\kappa_1^2-C_2\kappa_1\sum_i F^{ii},
         \end{aligned}
	\end{equation}
	for sufficient large $\kappa_1$. Inserting (\ref{C1}) into (\ref{p.13}) give
	\begin{equation}\label{C2}
	\begin{aligned}
	0\geq &\frac{1}{\kappa_1} F^{ii}a_{ii11}-\frac{1}{\kappa_1^2}F^{ii}\left(a_{11i}\right)^2
	+\frac{\tau}{\tau-a}F^{ii}\kappa_i^2+\frac{1}{\tau-a}\sum_lF^{ii}(a_{iil}+\bar R_{0iil})\eta_l\\
	&+\sum_iF^{ii}\left(\frac{\kappa_i\eta_i}{\tau-a}\right)^2-C_1\kappa_1+(h'\beta-C_2)\sum_i F^{ii}-C_3(\beta).
        \end{aligned}
	\end{equation}
Taking covariant differential equation (\ref{NPE}) yields
	\begin{equation}\label{CD1}
	F^{ii}a_{iij}=\bar{\psi}_V(\nabla_{E_j}V)-a_{jl}\bar{\psi}_{\nu}(E_l).
	\end{equation}
Taking covariant differential equation (\ref{CD1}) again yields
	\begin{equation}\label{CD2}
	\begin{aligned}
	F^{ii}a_{ii11}+F^{ij,kl}a_{ij1}a_{kl1}=&\bar \psi_{VV}(\n_{E_1}V,\n_{E_1}V)+2a_{1l}\bar \psi_{V\nu}(\n_{E_1}V,E_l)-a_{1l1}\bar \psi_{\nu}(E_l)+a_{1k}a_{1l}\bar\psi_{\nu\nu}(E_l,E_l)\\
	\geq & -C(1+\kappa_1^2)-a_{1l1}\bar \psi_{\nu}(E_l)\\
	=&-C(1+\kappa_1^2)-(a_{11l}-\bar R_{01l1})\bar \psi_{\nu}(E_l)\\
	\geq& -C(1+\kappa_1^2+\beta \kappa_1)-a_{11l}\bar \psi_V(E_l).
	\end{aligned}
	\end{equation}
	where we use the Codazzi equation in the last equality, (\ref{p.11}) and the bound of curvature of  ambient manifold in the last inequality.
	
	We also have
\begin{eqnarray}\label{CD}
\frac{1}{\kappa_1}\sum_l a_{11l}\bar \psi_V(E_l)-\sum_l\frac{\eta_l}{\tau-a}F^{ii}a_{iil}=\sum_l\beta\eta_l\bar \psi_V(E_l)-\sum_l\frac{\eta_l}{\tau-a}\bar{\psi}_V(\nabla_{E_j}V).
\end{eqnarray}
	Combing the inequality (\ref{CD2}) and (\ref{CD}),  (\ref{C2}) gives
	\begin{equation}\label{CD3}
	\begin{aligned}
	0\geq &\frac{1}{\kappa_1} \left(-F^{ij,kl}a_{ij1}a_{kl1} \right)-\frac{1}{\kappa_1^2}F^{ii}\left(a_{11i}\right)^2
	+\frac{\tau}{\tau-a}F^{ii}\kappa_i^2\\
	&+\sum_{i=1}^nF^{ii}\left(\frac{\kappa_i\eta_i}{\tau-a}\right)^2-C_1\kappa_1+(h'\beta-C_2)\sum_i F^{ii}-C_3(\beta)\\
	\end{aligned}
	\end{equation}

	In the following, we consider two cases.
	
	\vspace{0.3cm}
	
	\noindent {\bf  Case  1}  In this case, we suppose that $\kappa_{n}\leq-\theta\kappa_{1}$
	for some positive constant $\theta$ to be chosen later. In this case, using the concavity of $F$, we  discard the term $-\frac{1}{\kappa_1}F^{ij,kl}a_{ij1}a_{kl1}$.
	
	By Young's inequality and (\ref{p.11}), we have
	\begin{equation}\label{Cas1}
	\begin{aligned}
	\frac{1}{\kappa_1^2} F^{ii}|a_{11i}|^2\leq&
	(1+\epsilon^{-1})\beta^2 F^{ii}|\eta_i|^2
	+\frac{(1+\epsilon)}{(\tau-a)^2} F^{ii}|\tau_i|^2\\
	\leq & C_4(1+\epsilon^{-1})\beta^2 \sum_iF^{ii}
	+\frac{(1+\epsilon)}{(\tau-a)^2} F^{ii}|\tau_i|^2
	\end{aligned}
	\end{equation}
	for any $\epsilon>0$, where we use $|\n \eta|\leq C$. From (\ref{CD3}) and (\ref{Cas1}), we obtain
	\begin{equation}\label{Cas2}
	\begin{aligned}
	0\geq & -C_1\kappa_1-C_3(\beta)+\left(\frac{\tau}{\tau-a}-C_5\epsilon\right)
	F^{ii}\kappa_i^2
	+\left(h^{\prime}\beta-C_2-C_4(1+\epsilon^{-1})\beta^2\right)\sum_iF^{ii}\\
	\geq &-\bar C(\kappa_1+\beta)+C_6\sum_{i=1}^nF^{ii}\kappa_i^2-C_7\beta^2 \sum_iF^{ii}.
	\end{aligned}
	\end{equation}
	Since  $F^{11}\leq F^{22}\leq \cdots \leq F^{nn}$ and $\kappa_{n}\leq-\theta\kappa_{1}$, we get
	$$
	\sum_{i=1}^n F^{ii} \kappa_i^2 \geq F^{nn}
	\kappa_n^2\geq \frac{1}{n}\theta^2  \sum_iF^{ii} \kappa_1^2.
	$$
	Hence,
\begin{equation}\label{C2e}
		0\geq -\bar C(\kappa_1+\beta)+\left(C_6\frac{1}{n}\theta^2\kappa_1^2-C_7\beta^2\right) \sum_iF^{ii}.
		\end{equation}
	Since $\sum_iF^{ii}\geq 1$ for sufficient large $\kappa_1$, the inequality (\ref{C2e}) clearly implies the bound of $\kappa_1$ from above.

	\vspace{0.3cm}
	
	\noindent {\bf  Case  2}
	In this case, we assume that
	$\kappa_{n}\geq-\theta\kappa_{1}$. Hence,
	$\kappa_{i}\geq \kappa_n\geq-\theta\kappa_{1}$. We then group the indices in
	$\{1,...,n\}$ in two sets $I=\{j:F^{jj}\leq4F^{11}\}$ and
	$J=\{j:F^{jj}>4F^{11}\}$. By using (\ref{p.11}), we can infer
	\begin{equation}\label{Case21}
	\begin{aligned}
	\frac{1}{\kappa_1^2}\sum_{i\in I} F^{ii}|a_{11i}|^2
	\leq & C_1(1+\epsilon^{-1})\beta^2 F^{11}
	+\frac{(1+\epsilon)}{(\tau-a)^2} F^{ii}|\tau_i|^2
	\end{aligned}
	\end{equation}
	for any $\epsilon>0$. Therefore it follows from (\ref{CD3}) that
	\begin{equation}\label{Case22}
	\begin{aligned}
	0\geq & -C_1\kappa_1-C_3(\beta)-\frac{1}{\kappa_1}F^{ij,kl}a_{ij1}a_{kl1}
	+\left(\frac{\tau}{\tau-a}-C_5\epsilon\right)F^{ii}\kappa_i^2\\
	&+\left(h^{\prime}\beta-C_2\right)\sum_iF^{ii}-\frac{1}{\kappa_1^2}\sum_{i\in J}F^{ii}\left(\nabla_ia_{11}\right)^2- C_4(1+\epsilon^{-1})\beta^2 F^{11}.
	\end{aligned}
	\end{equation}
	Using Lemma (\ref{AG}) and the  Codazzi's
	equation, one gets
	\begin{equation}\label{FCod}
	\begin{aligned}
	-\frac{1}{\kappa_{1}}F^{ij,kl}a_{ij1}a_{kl1} \geq&
	-\frac{2}{\kappa_{1}}\sum_{j\in J}\frac{f_{1}-f_{j}}{\kappa_{1}
		-\kappa_{j}}\big(a_{1j1} \big)^{2}=-\frac{2}{\kappa_{1}}\sum_{j\in J}\frac{f_{1}-f_{j}}{\kappa_{1}
		-\kappa_{j}}\big(a_{11j}-\bar R_{01j1}\big)^{2}.
	\end{aligned}
	\end{equation}
	Following the argument in \cite{JL06}, we may verify that choosing
	$\theta=\dfrac{1}{2}$ it holds that  for all $j\in J$,
	\begin{equation}
	\begin{aligned}\label{fj}
	-\frac{2}{\kappa_{1}}\frac{f_{1}-f_{j}}{\kappa_{1}
		-\kappa_{j}}\geq\frac{f_{j}}{\kappa_{1}^{2}}=\frac{F^{jj}}{\kappa_1^2}.
	\end{aligned}
	\end{equation}
	Combining (\ref{Case22}), (\ref{FCod}) and (\ref{fj}), we obtain
	\begin{equation}\label{5.20}
	\begin{aligned}
	0\geq & -C_1\kappa_1-C_3(\beta)-2\frac{1}{\kappa_{1}^2}\sum_{j\in J}F^{jj} a_{11j}\bar R_{01j1}+\left(\frac{\tau}{\tau-a}-C_5\epsilon\right)F^{ii}\kappa_i^2\\
	&
	+\left(h^{\prime}\beta-C_2\right)\sum_iF^{ii}- C_4(1+\epsilon^{-1})\beta^2 F^{11}\\
	\geq& -\bar C(\kappa_1+\beta)+C_6\sum_{i=1}^nF^{ii}\kappa_i^2
	+\left(h^{\prime}\beta-C\right)\sum_iF^{ii}- C_7\beta^2 F^{11}\\
	\geq&(C_8(h'\beta-C_2)-C_1)\kappa_1+(C_6\kappa_1^2-C_7\beta^2)F^{11}-\bar C_3(\beta),
		\end{aligned}
	\end{equation}
	 by choosing $\epsilon$ small and sufficient large $\kappa_1$. Here we also used \eqref{p.11} and
	$$\sum_{i=1}^n F^{ii}\geq C\kappa_1.$$	For $\beta>0$ sufficiently large,  we may obtain an upper bound for $\kappa_1$ by \eqref{5.20}
\end{proof}

\begin{rem}
The similar idea also has been used in \cite{CW}, \cite{SUW} and \cite{BJ}.
\end{rem}

\section{A global $C^2$ estimate for convex Hypersurface  in warped product space}

In this section, following the arguments in \cite{GRW2015}, we can obtain the  $C^2$ estimates for convex solutions to curvature equation \eqref{WeinEqu} in $\Sigma$, namely, proving Theorem \ref{MainThm2}.

	Define the following test function,
	\begin{eqnarray}
 \quad \Psi=\frac12\ln P(\kappa)-N \log \tau-\beta{\eta},
	\end{eqnarray}
	where	$P(\kappa)= \kappa^2_1+\cdots+\kappa_n^2=\sum_{i,j=1}^na_{ij}^2, N,\beta$ is a constant to be determined later.
	
	We assume that $\Psi$ achieves its maximum value at $X_0\in \Sigma$. By a proper
	rotation, we may assume that $(a_{ij})$ is a diagonal matrix at the
	point, and $a_{11}\geq a_{22}\cdots\geq a_{nn}$.
	\par
	At $x_0$, differentiate $\Psi$ twice,
	\begin{eqnarray}\label{3}
	0=\Psi_i= \frac{\sum_{l,j}a_{lj}a_{lji}}{P}-N\frac{\tau_i}{\tau}-\beta \eta_i
	=\frac{\sum_l\kappa_la_{lli}}{P}+N\frac{a_{ii}\eta_i}{\tau}-\beta \eta_i\ \  =   \  \ 0,
	\end{eqnarray}
	and,
	\begin{eqnarray}\label{4}
	\\
	0&\geq & \Psi_{ii}\nonumber\\
	&\geq &\frac{1}{P}\left(\sum_l\kappa_la_{llii}+\sum_{l}a_{lli}^2+\sum_{p\neq q}a_{pqi}^2\right)-\frac{2}{P^2}\left(\sum_l\kappa_la_{lli}\right)^2
	-N\frac{\tau_{ii}}{\tau} +N\frac{\tau_i^2}{\tau^2}-\beta \eta_{ii}\nonumber\\
	&=&\frac{1}{P}\left[\sum_{l}\kappa_l\left(a_{iill}-a_{lm}\left(a_{mi}a_{il}-a_{ml}a_{ii}\right)-a_{mi}\left(a_{mi}a_{ll}
	-a_{ml}a_{li}\right)
	+\bar R_{0iil;l}-2a_{ml}\bar{R}_{miil}+a_{il}\bar{R}_{0i0l}\right.\right.\nonumber\\
	&&+a_{ll}\bar{R}_{0ii0}\left.\left.+\bar{R}_{0lil;i}-2a_{mi}\bar{R}_{mlil}+a_{ii}\bar{R}_{0l0l}+a_{li}\bar{R}_{0li0}\right)+\sum_{l}a_{lli}^2+\sum_{p\neq q}a_{pqi}^2\right]\nonumber\\
	&&-\frac{2}{P^2}\left(\sum_l\kappa_la_{lli}\
	\right)^2+\frac{N}{\tau}\sum_la_{iil}\eta_l
	-\frac{Nh^\prime}{\tau}\kappa_i +N\kappa_i^2+\frac{N}{\tau^2}\kappa_{i}^2\eta_i^2+\frac{N}{\tau}\sum_l\bar{R}_{0iil}\eta_l+\beta (h^{\prime}\delta_{ii}-\tau\kappa_i).\nonumber
	\end{eqnarray}
	Multplying $\sigma_k^{ii}$ both sides gives
	\begin{equation}\label{S1}
	\begin{aligned}
	0\geq & \frac{1}{P}\left[\sum_{l}\kappa_l\left( \sigma_k^{ii}a_{iill}-\sigma_k^{ii}a_{lm}\left(a_{mi}a_{il}-a_{ml}a_{ii}\right)-\sigma_k^{ii}a_{mi}\left(a_{mi}a_{ll}
	-a_{ml}a_{li}\right)\right.\right.\\
	& \left.+\sigma_k^{ii}\bar R_{0iil;l}-2\sigma_k^{ii}a_{ml}\bar{R}_{miil}+\sigma_k^{ii}a_{il}\bar{R}_{0i0l}
	 +\sigma_k^{ii}a_{ll}\bar{R}_{0ii0}+\sigma_k^{ii}\bar{R}_{0lil;i}-2\sigma_k^{ii}a_{mi}\bar{R}_{mlil}+\sigma_k^{ii}a_{ii}\bar{R}_{0l0l}+\sigma_k^{ii}a_{li}\bar{R}_{0li0}\right)\\
	&\left.+\sum_{l}\sigma_k^{ii}a_{lli}^2+\sum_{p\neq q}\sigma_k^{ii}a_{pqi}^2\right]\\
	&-\frac{2}{P^2}\sigma_k^{ii}\left(\sum_l\kappa_la_{lli}\
	\right)^2+\frac{N}{\tau}\sum_l\sigma_k^{ii}a_{iil}\eta_l
	-\frac{Nh^\prime}{\tau}kf +N\sigma_k^{ii}\kappa_i^2+\frac{N}{\tau^2}\sigma_k^{ii}\kappa_{i}^2\eta_i^2\\
	&+\frac{N}{\tau}\sum_l\sigma_k^{ii}\bar{R}_{0iil}\eta_l+\beta \left(h^{\prime}\sum_i\sigma_k^{ii}-\tau kf\right)\\
	\geq &\frac{1}{P}\left[\sum_{l}\kappa_l\sigma_k^{ii}a_{iill}+kf\sum_l\kappa_l^3-C(1+\kappa_1^2)\sum_i\sigma_k^{ii}+\sum_{l}\sigma_k^{ii}a_{lli}^2+\sum_{p\neq q}\sigma_k^{ii}a_{pqi}^2\right]\\
	&-\frac{2}{P^2}\sigma_k^{ii}\left(\sum_l\kappa_la_{lli}\
	\right)^2+\frac{N}{\tau}\sum_l\sigma_k^{ii}a_{iil}\eta_l
	+(N-1)\sigma_k^{ii}\kappa_i^2+\left(C_1\beta-C_2N\right)\sum_i\sigma_k^{ii}-C(\beta,N).
	\end{aligned}
	\end{equation}

	Now differentiate equation \eqref{WeinEqu} twice,

	\begin{eqnarray}\label{6}
	\sigma_k^{ii}a_{iij}&=& d_V\psi(\n_j V) + d_{\nu} \psi( \n_j\nu)\ \ = \ \ h^\prime d_V \psi(E_j)-a_{jl}d_{\nu}\psi(E_l),
	\end{eqnarray}
	and
	\begin{eqnarray}\label{7}
	&&\sigma_k^{ii}a_{iijj}+\sigma_k^{pq,rs}a_{pqj}a_{rsj}\\
	&=&d_V\psi(\n_{jj}V)+d^2_{V}\psi(\n_jV,\n_jV)+2d_Vd_{\nu}\psi(\n_jV,\n_j\nu)
	+d^2_{\nu}\psi(\n_j\nu,\n_j\nu)+d_{\nu}\psi(\n_{jj}\nu).\nonumber\\
	&=&-\frac{h^{\prime\prime}}{h}\eta_j d_V\psi(E_j)+h^\prime a_{jj}d_V\psi(\nu)+(h^\prime)^2 d^2_{V}\psi(E_j,E_j)-2h^\prime a_{jj}d_Vd_{\nu}\psi(E_j,E_j)+a_{jj}^2d^2_{\nu}\psi(E_j,E_j)\nonumber\\
	&&-\sum_la_{ljj}d_{\nu}\psi(E_l)-a_{jj}^2d_{\nu}\psi(\nu)\nonumber\\
	&\geq&-C-C\kappa_j^2-\sum_la_{ljj}d_{\nu}\psi(E_l),\nonumber
	\end{eqnarray}
	where the Schwarz inequality is used in the last inequality.
	
	Since
	\begin{eqnarray}\label{conc}
	-\sigma_k^{pq,rs}a_{pql}a_{rsl}\ \ =\  \ -\sigma_k^{pp,qq}a_{ppl}a_{qql}+\sigma_k^{pp,qq}a_{pql}^2,
	\end{eqnarray}
	it follows from (\ref{3}) and (\ref{6}),
	and Codazzi equation (\ref{Codazzi}) implies
	\begin{equation}\label{Third2}
	\begin{aligned}
	\frac{1}{P}\sum_{l,j}\kappa_ja_{ljj}d_{\nu}\psi(E_l)=&\frac{N}{\tau}\sum_j\sigma_k^{ii}a_{iij}\eta_j -\frac{Nh^\prime}{\tau}\sum_jd_V\psi(E_j)\eta_j+\beta\sum_j\eta_j d_\nu \psi(E_j)\\
	&-\frac1P\sum_{l,j}\kappa_j\bar{R}_{0jlj}d_{\nu}\psi(E_l).
	\end{aligned}
	\end{equation}
	Denote
	\begin{eqnarray}
	&& A_i= \frac{\kappa_i}{P}\left(K(\sigma_k)_i^2-\sum_{p,q}\sigma_k^{pp,qq}a_{ppi}a_{qqi}\right), \ \  B_{i}=2\sum_j\frac{\kappa_j}{P}\sigma_k^{jj,ii}a_{jji}^2, \nonumber \\
	&& C_i=2\sum_{j\neq i}\frac{\sigma_k^{jj}}{P}a_{jji}^2, \ \
	D_i=\frac{1}{P}\sum_j\sigma_k^{ii}a_{jji}^2,\  \
	E_i=\frac{2\sigma_k^{ii}}{P^2}\left(\sum_j \kappa_ja_{jji}\right)^2.\nonumber
	\end{eqnarray}
	By (\ref{6}) and  (\ref{Third2}), we can infer
	\begin{equation}\label{S3}
	\begin{aligned}
	0\geq&\frac{1}{P}\left[\sum_{l}\kappa_l\left(-C-C\kappa_l^2-K(\sigma_k)_l^2+K(\sigma_k)_l^2-\sigma_k^{pp,qq}a_{ppl}a_{qql}+2\sum_{j\not= l}\sigma_k^{ll,jj}a_{ljl}^2\right)\right.\\
	&\left.+kf\sum_l\kappa_l^3+\sum_{l}\sigma_k^{ii}a_{lli}^2+2\sum_{j\neq i}\sigma_k^{ii}a_{iji}^2\right]\\
	&-\frac{2}{P^2}\sigma_k^{ii}\left(\sum_l\kappa_la_{lli}\
	\right)^2
	+(N-1)\sigma_k^{ii}\kappa_i^2+\left(C_1\beta-C_2N-C_3\right)\sum_i\sigma_k^{ii}-C(\beta,N)-\frac{C_4}{\kappa_{1}}\\
	\end{aligned}
	\end{equation}
	From the Codazzi equation $a_{iji}=a_{iij}-\bar R_{0iji}$ and the Cauchy-Schwarz inequality, we have
	\begin{equation*}
	\begin{aligned}
	2\sum_{j\not= l}\sigma_k^{ll,jj}\kappa_la_{ljl}^2=& 2\sum_{j\not= l}\sigma_k^{ll,jj}\kappa_l\left(a_{llj}-\bar R_{0ljl}\right)^2\\
	\geq&  (2-\delta)\sum_{j\not= l}\kappa_l\sigma_k^{ll,jj}a_{llj}^2-C_\delta \sum_{j}\sigma_k^{jj}\\
	=&(2-\delta)\sum_{j\not= l}\kappa_l\sigma_k^{ll,jj}a_{llj}^2-C_\delta \sum_{j\not= l}\kappa_l\sigma_k^{ll,jj},	\end{aligned}
	\end{equation*}
	and
	\begin{equation*}
\begin{aligned}	
	2\sum_{j\neq i}\sigma_k^{ii}a_{iji}^2=2\sum_{j\not= i}\sigma_k^{ii,jj}\left(a_{iij}-\bar R_{0iji}\right)^2
	\geq&(2-\delta)\sum_{j\not= i}\sigma_k^{ii}a_{iij}^2-C_\delta \sum_{i}\sigma_k^{ii},
	\end{aligned}
	\end{equation*}
	where $\delta$ is a small constant to be determainted later and $C_\delta$ is a constant depending on $\delta$.
	Therefore, we obtain
	\begin{equation}\label{S4}
	\begin{aligned}
	0\geq&\frac{1}{P}\left[\sum_{l}\kappa_l\left(-C-C\kappa_l^2-K(\sigma_k)_l^2\right)+kf\sum_l\kappa_l^3\right]\\
	&+(N-1)\sigma_k^{ii}\kappa_i^2+\left(C_1\beta-C_2N-C_3-C_\delta \frac{1}{P}\right)\sum_i\sigma_k^{ii}-C(\beta,N)-\frac{C_4}{\kappa_{1}}\\
	&+\left(1-\frac{\delta}{2}\right)\sum_{i} (A_i+B_i+C_i+D_i-E_i)+\frac{\delta}{2}\sum_{i}\left(A_i+D_i\right)-\frac{\delta}{2}\frac{2}{P^2}\sigma_k^{ii}\left(\sum_l\kappa_la_{lli}\right)^2
	\end{aligned}
	\end{equation}
	In the following, the proof will be divided into two cases.
	
	\noindent Case (A): We have some positive number sequence $\{\delta_i\}^k_{i=1}$. There exists some $2\leq i\leq k-1$, such that  $\kappa_i\geq \delta_i\kappa_1$ and $\kappa_{i+1}\leq \delta_{i+1}\kappa_1$. Choosing $K$ sufficient large, we have $A_i$ is positive by Lemma \ref{lemma 1} . From
	Lemma 4.2, Lemma 4.3 and Corollary 4.4 in \cite{GRW2015}, we can infer
	$$
	\sum_{i} (A_i+B_i+C_i+D_i-E_i)\geq 0.
	$$
	From
	(\ref{3}) and Cauchy-Schwarz inequlity, we have
	\begin{equation}\label{S5}
	\begin{aligned}
	\sigma_k^{ii}\left(\frac{1}{P^2}\sum_l\kappa_la_{lli}\right)^2=
	&\sigma_k^{ii}\left(\frac{N}{\tau}\kappa_i-\beta\right)^2\eta_i^2
	\leq  C_5\left(N^2\sigma_k^{ii}\kappa_i^2+\beta^2\sum_{i}\sigma_k^{ii}\right).
	\end{aligned}
	\end{equation}
	Inserting (\ref{S5}) into (\ref{S4}), we get
	\begin{equation}\label{S6}
	\begin{aligned}
	0\geq&\frac{1}{P}\left[\sum_{l}\kappa_l\left(-C-C\kappa_l^2-K(\sigma_k)_l^2\right)+kf\sum_l\kappa_l^3\right]\\
	&+N\left(\frac 12- C_5\delta N\right)\sigma_k^{ii}\kappa_i^2+ \left(\frac N2-1\right)\sigma_k^{ii}\kappa_i^2\\
	&+\left(C_1\beta-C_2N-C_3-C_\delta \frac{1}{P}-C_5\delta \beta^2\right)\sum_i\sigma_k^{ii}-C(\beta,N)-\frac{C_4}{\kappa_{1}}\\
	\geq & -\frac1P\left(C(K)+C(K)\kappa_1^3\right)+C_6\left(\frac N2-1\right)\kappa_1\\
	&+N\left(\frac 12- C_5\delta N\right)\sigma_k^{ii}\kappa_i^2+ \left(C_1\beta-C_2N-C_3-C_\delta \frac{1}{P}-C_5\delta \beta^2\right)\sum_i\sigma_k^{ii}-C(\beta,N)-\frac{C_4}{\kappa_{1}}\\
	\geq& \left(\frac12 C_6N-C(K)\right)\kappa_1+N\left(\frac 12- C_5\delta N\right)\sigma_k^{ii}\kappa_i^2\\
	&+ \left(C_1\beta-C_2N-C_3-C_\delta \frac{1}{P}-C_5\delta \beta^2\right)\sum_i\sigma_k^{ii}-C(\beta,N)-\frac{C_4}{\kappa_{1}},
	\end{aligned}
	\end{equation}
	where we have used $\sigma_k^{ii}\kappa_i^2\geq c_0\kappa_1$.
	Now let us  choose these constants carefully.  Firstly, choose $N$ such that $$C(K)+1\leq \frac12 C_5N, \text{ and } N\geq 4.$$ Secondly, choose $\beta$ such that $$C_1\beta-C_2N-C_3-3\geq 0 .$$ Thirdly, choose the constant $\delta$ satisfying  $$\max\{N^2,\beta^2\}\leq (2C_5\delta)^{-1}.$$
	At last, take sufficient large $\kappa_1$ satisfying $$\frac{C_\delta}{P}\leq 1.$$ otherwise we are done. Finally, $\kappa_1$ has upper bound by \eqref{S6}.
	
	\noindent Case(B): If the Case(A) does not hold.  That means $\kappa_k\geq \delta_k\kappa_1$. Since $\kappa_l\geq 0$, we have,
	$$\sigma_k\geq \kappa_1\kappa_2\cdots\kappa_k\geq\delta_k^{k-1}\kappa_1^k.$$ This implies the bound of $\kappa_1$.

\section{Global curvature estimates for $(n-1)$ convex hypersurface}

For the functions $\tau$ and $\eta$, we introduce the test function which appeared firstly in \cite{GRW2015},
$$
\Psi=\log\log P-N\ln (\tau)-\beta \eta,$$ 
where 
$P=\dsum_le^{\kappa_l}$ and $\{\kappa_{l}\}_{l=1}^n$ are the eigenvalues of the second fundamental form. 

We may assume that the maximum of $\Psi$ is achieved  at some point
$X_0\in \Sigma$. After rotating the coordinates, we may assume the matrix
$(a_{ij})$ is diagonal at the point, and we can further  assume that
$a_{11}\geq a_{22}\cdots\geq a_{nn}$. Denote $\kappa_i=a_{ii}$.

Differentiate the function $\Psi$ twice  at  $X_0$, we have,
\begin{equation}\label{7.1}
\begin{aligned}
0=\Psi_i=&\dfrac{P_i}{P\log P}- N\frac{\tau_i}{\tau }-\beta \eta_i
=\dfrac{1}{P\log P}\sum_le^{\kappa_l}a_{lli}+N\frac{a_{ij}\eta_j}{\tau }-\beta \eta_i,
\end{aligned}
\end{equation}

and
\begin{eqnarray*}
	0&\geq&\Psi_{ii}\\
	&=& \frac{P_{ii}}{P\log P}-\frac{P_i^2}{P^2\log P}-\frac{P_i^2}{(P\log P)^2}-N\frac{\tau_{ii}}{\tau}+N\frac{\tau_i^2}{\tau^2}-\beta
	\eta_{ii}\\
	&=&\frac{1}{P\log P}\left[\sum_le^{\kappa_l}a_{llii}+\sum_le^{\kappa_l}a_{lli}^2+\sum_{\alpha\neq \gamma
	}\frac{e^{\kappa_{\alpha}}-e^{\kappa_{\gamma}}}{\kappa_{\alpha}-\kappa_{\gamma}}a_{\alpha\gamma i}^2-\left(\frac{1}{P}+\frac{1}{P\log P}\right)P_i^2\right]\nonumber\\
	&&+\frac{N}{\tau}\sum_la_{iil}\eta_l-\frac{ Nh^\prime}{\tau}\kappa_i+N\kappa_i^2+\frac{N}{\tau^2}\kappa_i^2\eta_i^2
	+\frac{N}{\tau}\sum_l\bar R_{0iil}\eta_l-\beta (\tau \kappa_i-h^\prime\delta_{ii})\nonumber\\
	&=&\frac{1}{P\log P}\left[\sum_le^{\kappa_l}\left(a_{iill}-a_{lm}\left(a_{mi}a_{il}-a_{ml}a_{ii}\right)-a_{mi}\left(a_{mi}a_{ll}
	-a_{ml}a_{li}\right)
	+\bar R_{0iil;l}\right.\right.\nonumber\\
	&&\left.-2a_{ml}\bar{R}_{miil}+a_{il}\bar{R}_{0i0l}+a_{ll}\bar{R}_{0ii0}+\bar{R}_{0lil;i}-2a_{mi}\bar{R}_{mlil}+a_{ii}\bar{R}_{0l0l}+a_{li}\bar{R}_{0li0}\right)\nonumber\\
	&& \left.+\sum_le^{\kappa_l}a_{lli}^2+\sum_{\alpha\neq \gamma
	}\frac{e^{\kappa_{\alpha}}-e^{\kappa_{\gamma}}}{\kappa_{\alpha}-\kappa_{\gamma}}a_{\alpha\gamma i}^2-\left(\frac{1}{P}+\frac{1}{P\log P}\right)P_i^2\right]\nonumber\\
	&&+\frac{N}{\tau}\sum_la_{iil}\eta_l-\frac{ Nh^\prime}{\tau}\kappa_i+N\kappa_i^2+\frac{N}{\tau^2}\kappa_i^2\eta_i^2
	+\frac{N}{\tau}\sum_l\bar R_{0iil}\eta_l-\beta (\tau \kappa_i-h^\prime\delta_{ii})\nonumber.
\end{eqnarray*}
Contract with $\sigma_{n-1}^{ii}$,
\begin{equation}\label{7.2}
\begin{aligned}
0\geq&\sigma_{n-1}^{ii}\Psi_{ii}\\
=&\frac{1}{P\log
	P}\left[\sum_le^{\kappa_l}\sigma_{n-1}^{ii}a_{iill}+(n-1)\psi\sum_le^{\kappa_l}\kappa_l^2-\sigma_{n-1}^{ii}\kappa_{i}^2\sum_le^{\kappa_l}\kappa_{l}-C(1+\kappa_{1})P\sum_i\sigma_{n-1}^{ii}\right.
\\
&\left.+\sum_l\sigma_{n-1}^{ii}e^{\kappa_l}a_{lli}^2+\sum_{\alpha\neq \gamma
}\frac{e^{\kappa_{\alpha}}-e^{\kappa_{\gamma}}}{\kappa_{\alpha}-\kappa_{\gamma}}\sigma_{n-1}^{ii}a_{\alpha\gamma i}^2-\left(\frac{1}{P}+\frac{1}{P\log P}\right)\sigma_{n-1}^{ii}P_i^2\right]\\
&+\frac{N}{\tau}\sum_l\sigma_{n-1}^{ii}a_{iil}\eta_l-\frac{ 1}{\tau}(n-1)Nh^\prime \psi+N\sigma_{n-1}^{ii}\kappa_i^2\\
&+\frac{N}{\tau^2}\sigma_{n-1}^{ii}\kappa_i^2\eta_i^2+\frac{N}{\tau}\sum_l\sigma_{n-1}^{ii}\bar R_{0iil}\eta_l-(n-1)\beta \tau \psi+h^\prime\beta\sum_i\sigma_{n-1}^{ii}.
\end{aligned}
\end{equation}
Inserting  (\ref{6}), (\ref{7}) into (\ref{7.2}), we obtain
\begin{equation}\label{7.3}
\begin{aligned}
0\geq&\sigma_{n-1}^{ii}\Psi_{ii}\\
\geq &\frac{1}{P\log P}\left[\sum_le^{\kappa_l}\left(-C-C\kappa_1^2-K(\sigma_{n-1})_l^2+K(\sigma_{n-1})_l^2-\sigma_{n-1}^{pq,rs}a_{pql}a_{rsl}\right)\right.\\
&-\sum_{l,j}e^{\kappa_l}a_{jll}d_{\nu}\psi(E_j)+(n-1)\psi\sum_le^{\kappa_l}\kappa_l^2-\sigma_{n-1}^{ii}\kappa_{i}^2\sum_le^{\kappa_l}\kappa_{l}-C(1+\kappa_{1})P\sum_i\sigma_{n-1}^{ii}\\
&\left.+\sum_l\sigma_{n-1}^{ii}e^{\kappa_l}a_{lli}^2+\sum_{\alpha\neq \gamma
}\frac{e^{\kappa_{\alpha}}-e^{\kappa_{\gamma}}}{\kappa_{\alpha}-\kappa_{\gamma}}\sigma_{n-1}^{ii}a_{\alpha\gamma i}^2-\left(\frac{1}{P}+\frac{1}{P\log P}\right)\sigma_{n-1}^{ii}P_i^2\right]\\
&+\frac{N}{\tau}\sum_l\sigma_{n-1}^{ii}a_{iil}\eta_l+N\sigma_{n-1}^{ii}\kappa_i^2+\frac{N}{\tau^2}\sigma_{n-1}^{ii}\kappa_i^2\eta_i^2
+(h^\prime\beta-C)\sum_i\sigma_{n-1}^{ii}-C(\beta, N).
\end{aligned}
\end{equation}
By (\ref{7.1}) and (\ref{6}), and the Codazzi equation (\ref{Codazzi}), we have
\begin{equation}\label{e2.19}
\begin{aligned}
\frac{1}{P\log P}\sum_{l,j}e^{\kappa_l}a_{jll} d_{\nu}\psi(E_j)
=&\frac{N}{\tau}\sum_l\sigma_{n-1}^{ii}a_{iil}\eta_l
-\frac{Nh^\prime}{\tau}\sum_ld_V\psi(E_l)\eta_l\\
&+\beta\sum_j\eta_jd_{\nu}\psi(E_j)-\frac{1}{P\log P}\sum_{l,j}e^{\kappa_l}\bar{R}_{0ljl} d_{\nu}\psi(E_j).
\end{aligned}
\end{equation}
By using \eqref{conc} and \eqref{7.3}, we get
\begin{equation}\label{e2.20}
\begin{aligned}
0\geq &\frac{1}{P\log P}\left[\sum_le^{\kappa_l}\left(-C-C\kappa_1^2-K(\sigma_{n-1})_l^2\right)+\sum_le^{\kappa_i}\left(K(\sigma_{n-1})_i^2-\sigma_{n-1}^{pp,qq}a_{ppi}a_{qqi}\right)\right.\\
&+2\dsum_{l\neq
	i}\sigma_{n-1}^{ii,ll}e^{\kappa_l}a_{lil}^2
+(n-1)\psi\sum_le^{\kappa_l}\kappa_l^2-\sigma_{n-1}^{ii}\kappa_{i}^2\sum_le^{\kappa_l}\kappa_{l}-C(1+\kappa_{1})P\sum_i\sigma_{n-1}^{ii}\\
&\left.+\sum_l\sigma_{n-1}^{ii}e^{\kappa_l}a_{lli}^2+2\sum_{l\neq i
}\sigma_{n-1}^{ii}\frac{e^{\kappa_{l}}-e^{\kappa_{i}}}{\kappa_{l}-\kappa_{i}}a_{li l}^2-\left(\frac{1}{P}+\frac{1}{P\log P}\right)\sigma_{n-1}^{ii}P_i^2\right]\\
&+N\sigma_{n-1}^{ii}\kappa_i^2+\frac{N}{\tau^2}\sigma_{n-1}^{ii}\kappa_i^2\eta_i^2
+(\beta h^\prime-C)\sum_i\sigma_{n-1}^{ii}-C(\beta,N)-\frac{C}{\kappa_1}.
\end{aligned}
\end{equation}
From the Codazzi equation (\ref{Codazzi}) and Cauchy-Schwarz inequality, we have
\begin{equation*}
\begin{aligned}
2(a_{lil})^2=&2(a_{lli}-\bar{R}_{0lil})^2\geq(2-\delta)a_{lli}^2-C_\delta,
\end{aligned}
\end{equation*}
where $\delta$ is a small constant to be determined later.
Denoting
\begin{eqnarray}
&&A_i=e^{\kappa_i}\left(K(\sigma_{n-1})_i^2-\sum_{p\neq q}\sigma_{n-1}^{pp,qq}a_{ppi}a_{qqi}\right), \ \  B_i=2\sum_{l\neq i}\sigma_{n-1}^{ii,ll}e^{\kappa_l}a_{lli}^2, \nonumber \\  &&C_i=\sigma_{n-1}^{ii}\sum_le^{\kappa_l}a_{lli}^2; \  \ D_i=2\sum_{l\neq
	i}\sigma_{n-1}^{ll}\frac{e^{\kappa_l}-e^{\kappa_i}}{\kappa_l-\kappa_i}a_{lli}^2,
\ \ E_i=\frac{1+\log P}{P\log P}\sigma_{n-1}^{ii}P_i^2\nonumber,
\end{eqnarray}
we have, by \eqref{e2.20},
\begin{equation}\label{e2.21}
\begin{aligned}
0\geq  &\left(1-\frac12\delta\right)\frac{1}{P\log P}\Big[A_i+B_i+C_i+D_i-E_i\Big]+\frac \delta 2\frac{1}{P\log P}\sum_i(A_i+C_i)\\
&-\frac \delta 2 \frac{1+\log P}{(P\log P)^2}\sigma_{n-1}^{ii}P_i^2
-\frac{C_\delta}{P\ln P}\dsum_{l\neq
	i}\sigma_{n-1}^{ii,ll}e^{\kappa_l}-\frac{C_\delta}{P\ln P}\sum_{l\neq i
}\sigma_{n-1}^{ii}\frac{e^{\kappa_{l}}-e^{\kappa_{i}}}{\kappa_{l}-\kappa_{i}}\\
&+\left(N-1\right)\sigma_{n-1}^{ii}\kappa_i^2+\frac{N}{\tau^2}\sigma_{n-1}^{ii}\kappa_i^2\eta_i^2
+\left(\beta h^\prime- C  \right)\sum_i\sigma_{n-1}^{ii}-C\kappa_1-C(\beta,N,K)-\frac{C}{\kappa_1}.
\end{aligned}
\end{equation}
By Schwarz inequality,  we always have
$$
P_i^2=\left(\sum_le^{\kappa_l}a_{lli}\right)^2\leq P\sum_le^{\kappa_l}a^2_{lli} ,
$$
 which implies
\begin{eqnarray}\label{7.6}
\frac{\delta}{2}\frac{1}{P\log P}C_i\geq \frac{\delta}{2}\frac{\log P }{(P\log P)^2}\sigma_{n-1}^{ii}P_i^2.
\end{eqnarray}
We also have
\begin{eqnarray}\label{7.7}
 \dsum_{l\neq i}\sigma_{n-1}^{ii,ll}e^{\kappa_l} \leq P\sum_{l\neq i}\sigma_{n-1}^{ii,ll}=2P\sum_i\sigma_{n-2}^{ii}=6P\sigma_{n-3}.
 \end{eqnarray}
We divided several cases to compare with $\sigma_{n-2}$. \\
\noindent Case (A)  If $\sigma_{n-2}\geq \sigma_{n-3}$, by \eqref{7.7}, we have, for $n\geq 3$,
\begin{eqnarray}\label{inequality}
\frac{C_{\delta}}{P\log P}\dsum_{l\neq i}\sigma_{n-1}^{ii,ll}e^{\kappa_l} \leq 3n^2 \left(\frac{C_{\delta}}{\log P}\sum_i\sigma_{n-1}^{ii}+1\right).
 \end{eqnarray}
 \noindent Case (B) If $\sigma_{n-2}\leq \sigma_{n-3}$, in $\Gamma_{n-1}$ cone, since $|\kappa_n|\leq \kappa_1/(n-1)$ by the argument in \cite{RW16}, we have
 $$\kappa_1\cdots\kappa_{n-2}\leq C_0\kappa_1\cdots\kappa_{n-3},$$ which implies $\kappa_{n-2}\leq C_0$. We further divide two sub-cases to discuss for index $l=1,\cdots,n$.\\
 \noindent Subcase (B1) If $2|\kappa_l|\leq \kappa_1$,  we have
 $$
 \frac{e^{\kappa_l}}{P}\leq e^{\kappa_l-\kappa_1}\leq e^{-\frac{\kappa_1}{2}}\leq \left[\dfrac{1}{(n-3)!}\left(\dfrac{\kappa_1}{2}\right)^{n-3}\right]^{-1}.
 $$
  The last inequality comes from
 Taylor expansion. Thus, we have
$$\frac{C_{\delta}}{P\log P}\sigma_{n-1}^{ii,ll}e^{\kappa_l}\leq C_1\frac{C_{\delta}}{\kappa_1}\leq 1,$$ for sufficient large $\kappa_1$. \\
\noindent  Subcase (B2) For sufficient large $\kappa_1$, if $2|\kappa_l|\geq \kappa_1$, by $\kappa_{n-2}\leq C_0$, we have $1\leq l\leq n-3$. In this case, we have
$$\sigma_{n-1}^{ii,ll}\leq C_1\kappa_1\cdots\kappa_{l-1}\kappa_{l+1}\cdots\kappa_{n-2}\leq \kappa_1\cdots\kappa_{l-1}\kappa_l\kappa_{l+1}\cdots\kappa_{n-2}\leq \sigma_{n-2}.$$
The middle inequality comes from $2\kappa_l\geq \kappa_1\geq 2C_1$ for sufficient large $\kappa_1$. Thus, we have
$$\frac{C_{\delta}}{P\log P}\sigma_{n-1}^{ii,ll}e^{\kappa_l}\leq \frac{C_{\delta}}{\log P}\sigma_{n-2}.$$
Combing (B1) and (B2) cases, we also have \eqref{inequality}.

 By mean value theorem, we have  some $ \xi$ between in $\kappa_i$ and $\kappa_l$ satisfying
\begin{eqnarray}\label{7.9}
\sum_{l\neq i}\sigma_{n-1}^{ii}\frac{e^{\kappa_{l}}-e^{\kappa_{i}}}{\kappa_{l}-\kappa_{i}}=\sum_{l\neq i}\sigma_{n-1}^{ii}e^{\xi}\leq (n-1)P\sum_i\sigma_{n-1}^{ii}
\end{eqnarray}
Hence, using the discussion in \cite{RW16}, we have $$A_i+B_i+C_i+D_i-E_i\geq 0.$$ Thus, by \eqref{e2.21}, \eqref{7.6}, \eqref{inequality}, \eqref{7.9}, we have
\begin{eqnarray}\label{e2.22}
0&\geq&-\frac \delta 2 \frac{1}{(P\log P)^2}\sigma_{n-1}^{ii}P_i^2+\frac{\delta}{2}\frac{1}{P\log P}\sum_iA_i
+\left(N-1\right)\sigma_{n-1}^{ii}\kappa_i^2+\frac{N}{\tau^2}\sigma_{n-1}^{ii}\kappa_i^2\eta_i^2\\
&&+\left(\beta h^\prime- C-\frac{C_{\delta}}{\log P}  \right)\sum_i\sigma_{n-1}^{ii}-C(\beta,N,K)-\frac{C}{\kappa_1}.\nonumber
\end{eqnarray}
From \eqref{7.1} and the Cauchy-Schwarz inequality, we have
\begin{equation}\label{7.11}
\begin{aligned}
\frac \delta 2 \sigma_{n-1}^{ii}\frac{P_i^2}{(P\log P)^2}=&\frac \delta 2 \sigma_{n-1}^{ii}\left(\frac{N}{\tau}\kappa_i-\beta\right)^2\eta_i^2
\leq C\delta  \left(N^2\sigma_{n-1}^{ii}\kappa_i^2+\beta^2\sum_i\sigma_{n-1}^{ii}\right).
\end{aligned}
\end{equation}
Therefore, by Lemma \ref{lemma 1}, \eqref{e2.22}, \eqref{7.11}, we obtain
\begin{equation}\label{7.12}
\begin{aligned}
0\geq&  
\left(N-1- C\delta N^2\right)\sigma_{n-1}^{ii}\kappa_i^2+\left(\beta h^\prime- C-\frac{C_{\delta}}{\log P}- C\delta \beta^2\right)\sum_i\sigma_{n-1}^{ii}-C(\beta,N,K)-\frac{C}{\kappa_1}.
\end{aligned}
\end{equation}
Since $\sigma_{n-1}^{ii}\kappa_i^2\geq C_1\kappa_1$, we only need to choose the constants $N,\beta,\delta$ carefully.   At first, we take constant $N$ satisfying
$$(N-n-1)C_1-C(K)\geq 1.$$ Secondly, we choose constant $\beta$ satisfying
$$\beta h^\prime-2C-2\geq C_2\beta-2C-2\geq 0.$$ Thirdly, we let constant $\delta$ satisfying
$$\max\{CC_1\delta N^2,C\delta\beta^2\}\leq 1.$$ At last, we take sufficient large $\kappa_1$ satisfying
$$\frac{C_{\delta}}{\log P}\leq \frac{C_{\delta}}{\kappa_1}\leq 1.$$
Finally, by \eqref{7.12}, we obtain the upper bound of $\kappa_{1}$.

\section{The existence results}

We are in the position to give the proof of the existence Theorems.
\begin{proof}[Proof of Theorem \ref{MainThm1}]
We use continuity method to solve the existence result.  For parameter $0\leq s\leq 1$, according to \cite{CNSIV}, \cite{ABdeL2009}, we consider the family of functions,
\begin{eqnarray}
\psi^s(V,\nu)=s\psi(t,u)+(1-s)\varphi(t)\sigma_k(\kappa(t)).\nonumber
\end{eqnarray}
where $\kappa(t)=h^\prime/h$ and $\varphi$ is a positive function defined on $I$ satisfying (i) $\varphi >0$;
	 (ii) $\varphi(t)>1$  for  $t\leq t_{-}$;
 (iii) $\varphi(t) < 1$ for $t\ge t_+$;
and 	(iv) $\varphi^\prime(t)<0$.
It is obvious that there exists a unique point $t_0\in (t_-,t_+)$ such that $\varphi(t_0)=1$. By \cite{ABdeL2009}, $z=t_0$ is the unique hypersurface satisfying problem \eqref{WeinEqu}
and one can check directly that $\psi^s$ also satisfies (a), (b), (c) in Theorem \ref{MainThm1}.  The hight estimate can easily obtained by comparison principal.

The openness and uniqueness also similar as \cite{CNSIV, GRW2015}. In view of Evans-Krylov theory, we only need hight, gradient and $C^2$ estimates to complete the closedness part which has been done in section 3, section 5 and section 6. We complete our proof.
\end{proof}

In what following, we discuss the constant rank theorem in space form according \cite{Guan}, \cite{GMa}, \cite{GLMa}. We rewrite our equation \eqref{WeinEqu} to be
\begin{eqnarray}\label{FF}
F(a)=-\sigma_k^{-1/k}(a)=-\psi^{-1/k}(X,\nu).
\end{eqnarray}

\begin{prop}\label{Prop}Suppose the ambient space  $(\bar M,\bar g)=(\mathbb S^{n+1}, \bar g)$ is the sphere with metric defined by \eqref{Wmetric} and $h(t)$ is given by \eqref{Fh}.
 Suppose  some compact hypersurface $\Sigma$ satisfies \eqref{FF} and its second fundamental form is non-negative definite. Let $X,Y$ be two vector fields in the ambient space and  $\bar\nabla$ be the covaraint diravative of the ambient space.  If the function $\psi$ locally satisfies
$$
\bar\nabla_{X}\bar\nabla_Y(\psi^{-1/k})+\lambda\psi^{-1/k}\bar g_{X,Y}\geq 0,
$$
 at any $(u,z)\in\Sigma$, then the hypersurfce $\Sigma$ is of constant rank. \end{prop}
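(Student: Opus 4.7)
The plan is to follow the Caffarelli--Guan--Ma / Bian--Guan style constant rank argument (as used in \cite{Guan}, \cite{GMa}, \cite{GLMa}), adapted to the sphere. Let $l = \min_{x\in\Sigma} \operatorname{rank}(a_{ij})(x)$, attained at some $x_0 \in \Sigma$. The set $Z_l = \{x \in \Sigma : \operatorname{rank}(a(x)) = l\}$ is automatically closed by lower semicontinuity of rank. I will show $Z_l$ is also open, so by connectedness of $\Sigma$ one obtains $Z_l = \Sigma$, which is the desired constant-rank conclusion.

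Near $x_0$, I split the indices into a ``good'' set $G$ with $|G|=l$ (indices for which $\kappa_\alpha$ stays uniformly positive) and a ``bad'' set $B$ with $|B|=n-l$ (indices for which $\kappa_\alpha$ is small). Introduce the test function
\[
\phi(x) = \sigma_{l+1}(a(x)) + \epsilon\,\frac{\sigma_{l+2}(a(x))}{\sigma_{l+1}(a(x))},
\]
for a small $\epsilon>0$ chosen as in \cite{GLMa}. Then $\phi\ge 0$, $\phi(x)=0$ if and only if $\operatorname{rank}(a(x))\le l$, and $\phi$ is smooth near $x_0$. The central step is to establish the linear elliptic differential inequality
\[
\sum_{i,j} F^{ij}\,\phi_{ij} \;\le\; C\bigl(\phi + |\nabla\phi|\bigr)
\]
in a neighborhood of $x_0$, where $F^{ij} = \partial F/\partial a_{ij}$ is the positive-definite linearization of $F(a)=-\sigma_k^{-1/k}(a)$. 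Once this is in hand, the strong minimum principle of Bony for degenerate elliptic operators forces $\phi\equiv 0$ near $x_0$, giving the openness of $Z_l$.

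To derive the inequality, I differentiate \eqref{FF} twice tangentially. Because $(\bar M,\bar g)$ is a space form of constant curvature $\lambda$, the ambient curvature satisfies $\bar R_{0ijk}=0$ for tangent indices, so the Codazzi equation \eqref{Codazzi} yields the full symmetry $a_{ijk}=a_{ikj}=a_{jik}$. This symmetry together with the concavity of $F=-\sigma_k^{-1/k}$ on $\Gamma_k$ absorbs the quadratic third-order terms $F^{pq,rs}a_{pq,m}a_{rs,m}$ produced by $F^{ij}\phi_{ij}$. Commuting $a_{\beta\beta,ii}$ to $a_{ii,\beta\beta}$ for $\beta\in B$ via the Ricci identity \eqref{RicId4} in the space form generates the explicit curvature contributions $\lambda(a_{ii}-a_{\beta\beta})$. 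Pulling the Hessian of $\psi^{-1/k}$ down from $\bar M$ to $\Sigma$ by the Gauss--Weingarten formulas and combining with these commutators produces precisely the quadratic form $\bar\nabla_X\bar\nabla_Y(\psi^{-1/k})+\lambda\,\psi^{-1/k}\bar g(X,Y)$, evaluated on unit tangent vectors indexed by $B$. The hypothesis of the Proposition is then exactly the sign condition needed to make this $B$--$B$ contribution non-positive, modulo terms that are $O(\phi+|\nabla\phi|)$.

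The main obstacle is the combinatorial bookkeeping of the expansion of $F^{ij}\phi_{ij}$ into a $G$--$G$ part (handled by concavity of $F$), mixed $G$--$B$ parts (handled by Cauchy--Schwarz against $\phi$, using that the restriction of $\sigma_l$ to the $G$-block is bounded below by a positive constant), and the essential $B$--$B$ part, where the hypothesis on $\psi^{-1/k}$ and the space-form curvature combine. Particular care is required for the auxiliary term $\sigma_{l+2}/\sigma_{l+1}$, whose denominator vanishes on $Z_l$; one must verify that the apparent singularity is cancelled by higher-order vanishing of the numerator, using the algebraic identities for elementary symmetric polynomials developed in \cite{GLMa}. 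Once that bookkeeping is completed cleanly, Bony's maximum principle and the connectedness of $\Sigma$ conclude the argument.
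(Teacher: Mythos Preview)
Your approach is essentially the same as the paper's: both follow the Caffarelli--Guan--Ma constant-rank machinery, use that in the space form the Codazzi equations are symmetric ($\bar R_{0ijk}=0$), commute fourth-order derivatives of $a_{ij}$ via the Ricci identity to pick up the $\lambda$-curvature terms, and identify the zero-th order contribution on the bad indices as precisely $(\psi^{-1/k})_{,ii}+\lambda\psi^{-1/k}\delta_{ii}$, which the hypothesis makes nonnegative. The only real difference is the test function: the paper (following \cite{Guan}) takes the linear combination $\varphi=\sigma_{l+1}(a)+A\,\sigma_{l+2}(a)$, whereas you take the Bian--Guan variant $\phi=\sigma_{l+1}+\epsilon\,\sigma_{l+2}/\sigma_{l+1}$. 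Both work, but the paper's choice sidesteps the denominator issue you flag (the apparent $0/0$ on $Z_l$), so the bookkeeping there is lighter; conversely your quotient form sometimes gives cleaner control of the mixed $G$--$B$ third-order terms. One small point: in passing from the intrinsic Hessian $(\psi^{-1/k})_{ii}$ on $\Sigma$ to the ambient Hessian $(\psi^{-1/k})_{,ii}$ you pick up an extra $-a_{ii}(\psi^{-1/k})_\nu$; the paper notes explicitly that this is harmless because $i$ is a bad index (so $a_{ii}=O(\phi)$), and you should record that step as well.
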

\begin{proof} According to \cite{Guan},
 suppose $P_0$ is the point where the second fundamental form is of the minimal rank $l$. Let $O$
be some open neighborhood of $P_0$.  If $O$ is sufficient small, we can pick constant A  as in \cite{Guan}. Then we use the test function $\varphi=\sigma_{l+1}(a)+A\sigma_{l+2}(a)$
to establish a differential inequality.

Now we choose local orthonormal frame $\{e_1\cdots,e_n\}$ in the hypersurface $\Sigma$. Since $\bar M$ is the sphere with sectional curvatrue $\lambda$, we obviously have
$$\bar{R}_{abcd}=\lambda (\delta_{ac}\delta_{bd}-\delta_{ad}\delta_{bc}).$$ By Lemma \ref{RicciId}, we have
\begin{eqnarray}
\varphi_j&=&(\sigma_{l+1}^{ii}+A\sigma_{l+2}^{ii})a_{iij},\\
 \varphi_{jj}&=&(\sigma_{l+1}^{ii}+A\sigma_{l+2}^{ii})a_{iijj}+(\sigma_{l+1}^{pq,rs}+A\sigma_{l+2}^{pq,rs})a_{pqj}a_{rsj}\nonumber\\
 &=&(\sigma_{l+1}^{ii}+A\sigma_{l+2}^{ii})[a_{jjii}-a_{im}(a_{mj}a_{ji}-a_{mi}a_{jj})-a_{mj}(a_{mj}a_{ii}-a_{mi}a_{ij})\nonumber\\
&&-2a_{mi}\lambda(\delta_{mj}\delta_{ij}-\delta_{mi}\delta_{jj})+a_{ji}\lambda\delta_{00}\delta_{ji}+a_{ii}\lambda(-\delta_{00}\delta_{jj})\nonumber\\
&&-2a_{mj}\lambda(\delta_{mj}\delta_{ii}-\delta_{mi}\delta_{ij})+a_{jj}\lambda \delta_{00}\delta_{ii}-a_{ij}\lambda\delta_{00}\delta_{ij} ]+(\sigma_{l+1}^{pq,rs}+A\sigma_{l+2}^{pq,rs})a_{pqj}a_{rsj}\nonumber.
\end{eqnarray}
 Thus, we have
\begin{eqnarray}
F^{jj}\varphi_{jj}&=&F^{jj}(\sigma_{l+1}^{ii}+A\sigma_{l+2}^{ii})[a_{jjii}+a^2_{ii}a_{jj}-a^2_{jj}a_{ii}+\lambda a_{ii}\delta_{jj}-\lambda a_{jj}\delta_{ii} ]\\
&&+F^{jj}(\sigma_{l+1}^{pq,rs}+A\sigma_{l+2}^{pq,rs})a_{pqj}a_{rsj}\nonumber\\
&=&(\sigma_{l+1}^{ii}+A\sigma_{l+2}^{ii})[(-\psi^{-1/k})_{ii}-\lambda \psi^{-1/k}\delta_{ii}]-(\sigma_{l+1}^{ii}+A\sigma_{l+2}^{ii})F^{pq,rs}a_{pqi}a_{rsi}\nonumber\\
&&+F^{jj}(\sigma_{l+1}^{pq,rs}+A\sigma_{l+2}^{pq,rs})a_{pqj}a_{rsj}\nonumber.
\end{eqnarray}
Since  the second fundamental form still satisfies Coddazi property in space form case, the process of dealing with the third order terms is same as \cite{Guan}, We also have
$$(\psi^{-1/k})_{,ii}=(\psi^{-1/k})_{ii}-a_{ii}(\psi^{-1/k})_{\nu},$$ where the comma in the first term means taking covariant derivative in the ambient space. Thus, since the index $i$ is bad indices, the third term is useless. We have our results.
\end{proof}

Now, we can prove Theorem \ref{MainThm3}.
\begin{proof}[ Proof of Theorem \ref{MainThm3}]
The proof  also use the degree theory by modifying the proof in \cite{GRW2015}. We consider the auxiliary equation
\begin{eqnarray}\label{4.10}
\sigma_k(\kappa(X))=\psi^s=\big(s\psi^{-1/k}(X,\nu)+(1-s)\bar \varphi ^{-1/k}\big)^{-k},
\end{eqnarray}
where $\bar \varphi$ is defined by  $\bar \varphi =C^k_n\varphi \kappa^k(t)$.
We claim that, for sphere case,
\begin{eqnarray}\label{claim}
(\bar \varphi^{-1/k})_{,ij}+\lambda \bar\varphi^{-1/k}\bar g_{ij}\geq 0.
\end{eqnarray}
where   $\{\bar e_0, \cdots, \bar e_n\}$ is the local orthonomal frame on $\bar M$ .
If the claim holds, by our condition, it is obvious that the $\psi^s$ satisfying condition (c) for parameter $0\leq s\leq 1$. By Proposition \eqref{Prop},  the strictly convexity is preserved along the flow $\psi^s$.

Now, let's discuss Claim \eqref{claim}. Define $\alpha(t)=(C^k_n\varphi)^{1/k}$. Since $\bar\varphi$ is some constant on every slice, we have, for $i,j=1,\cdots, n$
$$(\bar \varphi^{-1/k})_{,ij}=(\bar\varphi^{-1/k})_{ij}-a_{ij}(\bar\varphi^{-1/k})_t=-h^2(t)\kappa(t)\frac{\alpha'(t)\kappa(t)+\alpha(t)\kappa'(t)}{\alpha^2(t) \kappa^2(t)}g'_{ij}=-h^2(t)\frac{\alpha'(t)\kappa(t)+\alpha(t)\kappa'(t)}{\alpha^2(t) \kappa(t)}g'_{ij}.$$
Thus, in space form, we have $$\frac{h''(t)}{h(t)}=\kappa^2(t)-\frac{1}{h^2(t)}, \kappa'(t)=\frac{h''(t)}{h(t)}-\kappa^2(t)=-\frac{1}{h^2(t)}.$$  Then, we have
$$(\bar \varphi^{-1/k})_{,ij}=-h^2(t)\frac{\alpha'(t)\kappa(t)+\alpha(t)\kappa'(t)}{\alpha^2(t) \kappa(t)}g'_{ij}=\frac{\alpha(t)-\alpha'(t)h^2(t)\kappa(t)}{\alpha^2(t)\kappa(t)}g'_{ij}>0,$$
since $\alpha'<0$.

For unit (namely, $\lambda=1$) sphere case, it is easy to see that $$(\bar\varphi^{-1/k})_{tt}=\frac{\p^2 (\bar\varphi^{-1/k})}{\p t^2}=-\frac{\p}{\p t} \frac{\alpha'(t)\kappa(t)+\alpha(t)\kappa'(t)}{\alpha^2(t) \kappa^2(t)}= \frac{\p}{\p t}\frac{\alpha-\alpha' \sin t\cos t}{\alpha^2\cos^2 t}.$$ Thus, we have
\begin{eqnarray}
(\bar \varphi^{-1/k})_{tt}&=& \frac{2\alpha'\sin^2 t-\alpha''\sin t\cos t}{\alpha^2\cos^2 t}-\frac{\alpha-\alpha'\sin t \cos t}{\alpha^4\cos^4 t}\left[(-2\cos t\sin t)\alpha^2+2\alpha\alpha'\cos^2 t\right]\nonumber\\
&\geq&\frac{2\alpha'\sin^2 t}{\alpha^2 \cos^2 t}+2\frac{\alpha-\alpha'\sin t \cos t}{\alpha^2 \cos^4 t}\cos t\sin t\nonumber\\
&=&\frac{2\alpha\cos t\sin t}{\alpha^2 \cos^4 t}\nonumber\\
&>&0\nonumber,
\end{eqnarray}
if we require $\alpha'<0$ and $\alpha'' >0$. Thus, Claim \eqref{claim} holds for unit sphere. Since it is rescaling invariant, then \eqref{claim} holds for any $\lambda>0$.

Now we can give the requirements for functions $\varphi(t)$ to satisfying $\alpha'<0$ and $\alpha''>0$. It is a direct calculation that
$$k\alpha^{k-1}\alpha'=C^k_n\varphi'; \text{ and } k\alpha^{k-1}\alpha''+k(k-1)\alpha^{k-2}(\alpha')^2=C^k_n\varphi'',$$ which implies that
\begin{eqnarray}\label{7.6}
\varphi'<0; \text{ and }  \varphi\varphi''>\frac{k-1}{k}(\varphi')^2.
\end{eqnarray}
 We further need that (i) $\varphi >0$;
	 (ii) $\varphi(t)>1$  for  $t\leq t_{-}$;
 (iii) $\varphi(t) < 1$ for $t\ge t_+$. There is a lot of functions satisfying (i)(ii)(iii) and \eqref{7.6}, for example $$\varphi(t)=\exp(\frac{t_-+t_+}{2}-t).$$ Thus, the initial surfaces satisfy the condition of constant rank theorem and height estimates comes from comparing principal.  The  curvature estimates has been obtained in Section 5. The rest part of this proof is similar to convex case in Euclidean space, where we only need to replace the constant rank theorem in \cite{GRW2015} by Proposition 8.1 here.
\end{proof}

\begin{rem}
In hyperbolic space, the problem is that the slice spheres can not satisfying the constant rank theorem: Proposition \ref{Prop}. It may be  an interesting problem to find some other nontrivail initial family of hypersurfaces to satisfy   Proposition \ref{Prop}.
\end{rem}

\noindent {\it Acknowledgement:} The last author wish to thank  Professor Pengfei Guan for some discussion about constant rank theorems. He also would like to thank Tsinghua University for their support and hospitality during the paper being  prepared.

\providecommand{\bysame}{\leavevmode\hbox
	to3em{\hrulefill}\thinspace}

\begin{flushleft}
	Daguang Chen\\
	Department of Mathematical Sciences, Tsinghua University, Beijing, 100084, P.R. China \\
	E-mail: dgchen@math.tsinghua.edu.cn\\
\end{flushleft}

\begin{flushleft}
Haizhong Li,\\
Department of Mathematical Sciences, Tsinghua University, Beijing, 100084, P.R. China \\
E-mail: hli@math.tsinghua.edu.cn
\end{flushleft}

\begin{flushleft}
Zhizhang Wang,\\
{School of Mathematical Sciences, Fudan University, Shanghai, 200433, P.R. China}\\
E-mail: zzwang@fudan.edu.cn	
		\end{flushleft}

\end{document}